\newcounter{parag}
\newtheorem{lem}{Lemma}
\newtheorem*{theorem}{Theorem}
\newtheorem*{defi}{Definition}
\newtheorem*{cor}{Corollary}
\newtheorem{prop}{Proposition}
\begin{document}
%\baselineskip=1.5\baselineskip
\begin{center}
{\bf \Large On a connection between the order of a finite group and the set of conjugacy classes size}

\medskip

{\bf I.B. Gorshkov}
\end{center}

{\it Abstract: Let $G\in\{p,q\}^*$ be a finite group with trivial center, where $p,q\in\pi(G)$ and $p>q>5$. In the present paper it is proved that $|G|_{\{p,q\}}=|G||_{\{p,q\}}$; in particular $C_G(g)\cap C_G(h)=1$ for every $p$-element $g$ and every $q$-element $h$.
\smallskip

Keywords: finite group, conjugacy classes. \smallskip
}

\section*{Introduction}
There has been considerable work over the years into the relation between the
structure of a finite group and the cardinality of its conjugacy classes. One of the first results in this direction, due to Burnside, is that if a finite group has a
conjugacy class with prime power cardinality the group is not simple. Many papers, for example \cite{vse} and \cite{navarro}, was studied the structure of groups in which the orders of some elements and the their conjugacy classes are known. This paper study the structure of groups with restrictions on the set of conjugacy classes size.

In this paper, all groups are finite.
The number of elements of a set $\pi $ is denoted by $|\pi|$. Denote the set of prime divisors of positive integer $n$ by $\pi(n)$, and the set $\pi(|G|)$ for a group $G$ by $\pi(G)$.
The greatest power of a prime $p$ dividing the natural number $n$ will be denoted by $n_p$.

Let $G$ be a group and take $a\in G $. We denote by $a^G$ the conjugacy class of $G$ containing $a$ put $N(G)=\{|x^G|, x \in G\} \setminus\{1 \} $. Denote by the $|G||_p $ number $p^n$ such that $N(G)$ contains $\alpha$ multiple of $p^n$ and avoids the multiple of $p^{n+1}$. For $\pi\subseteq\pi(G)$ put $|G||_{\pi}=\prod_{p\in \pi}|G||_p $. For brevity, write $|G||$ to mean $|G||_{\pi (G)}$. Observe that $|G||_p$ divides $|G|_p $ for each $ p\in\pi(G) $. However, $|G||_p $ can be less than $|G|_p $. %Let $G$ be group with trivial centre, we say that $p\in\pi(G)$ is quasi-central divisors of $|G|$ if $|G|_p>|G||_p $. In the section \ref{exeples} several examples of groups with quasi-central divisors are given.

\begin{defi}
Let $p$ and $q$ be distinct numbers. Say that a group $G$ satisfies the condition $\{p,q\}^* $ and write $G\in\{p, q\}^*$ if we have $\alpha_{\{p, q\}}\in\{|G||_p, |G||_q, |G||_{\{p, q\}}\}$ for every $\alpha \in N(G)$.
%Пусть $p,q\in \pi(G)$ различные числа. Будем говорить, что группа $G$ удовлетворяет условию $\{p,q\}^*$ (для краткости записывать $G\in \{p,q\}^*$) если для любого $\alpha\in N(G)$ выполнено $\alpha_{\{p,q}\}\in\{|G||_p,|G||_q,|G||_{\{p,q\}}$.% и в $N(G)$ нет числа $|G||$.
\end{defi}
%В настоящей статье описываются группы с условием $\{p,q\}^*$.
In this paper we inspect the groups with $\{p,q\}^*$-properties.
\begin{theorem}
If $G\in\{p,q\}^*$ be a group with trivial center, where $p,q\in\pi(G)$ and $p>q>5$, then $|G|_{\{p,q\}}=|G||_{\{p, q\}}$.
\end{theorem}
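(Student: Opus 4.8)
The plan is to translate the hypothesis $G\in\{p,q\}^*$ into a pointwise statement about the $p$- and $q$-parts of class sizes, reduce the Theorem to a statement about the existence of elements with coprime centralizers, and then reach an almost simple group and invoke the classification. Write $P=|G||_p$ and $Q=|G||_q$. Since $Z(G)=1$ and $p,q\in\pi(G)$, a nontrivial Sylow subgroup cannot be central, so by the standard fact that $p\nmid|x^G|$ for all $x$ forces a central Sylow $p$-subgroup, some class size is divisible by $p$ and some by $q$; hence $P>1$ and $Q>1$. Now if $p\mid\alpha$ for $\alpha=|x^G|\in N(G)$, then $p\mid\alpha_{\{p,q\}}$, and since $Q$ is prime to $p$ the hypothesis forces $\alpha_{\{p,q\}}\in\{P,PQ\}$, so $\alpha_p=P$ in either case; symmetrically $q\mid\alpha$ forces $\alpha_q=Q$. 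This yields two consequences: (i) $|x^G|_p\in\{1,P\}$ and $|x^G|_q\in\{1,Q\}$ for every $x$; and (ii) because all three admissible values $P,Q,PQ$ exceed $1$, no class size is prime to both $p$ and $q$, i.e. every nontrivial class size is divisible by $p$ or by $q$; equivalently $A\cap B=\{1\}$ where $A=\{x:|x^G|_p=1\}$ and $B=\{x:|x^G|_q=1\}$. Finally, $|G|_p=P$ holds iff some class size has $p$-part $|G|_p$, i.e. iff some element has centralizer of order prime to $p$; since $|G|_p\ge P$ and $|G|_q\ge Q$ always, the Theorem is equivalent to the existence of $x,y\in G$ with $p\nmid|C_G(x)|$ and $q\nmid|C_G(y)|$.

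Next I would assume the Theorem fails and take a counterexample $G$ of least order; without loss of generality no element of $G$ has centralizer of order prime to $p$, so $p\mid|C_G(x)|$ for all $x$ and $|G|_p>P$. I would study a minimal normal subgroup $N$. If $N$ is nonabelian it is a direct product of isomorphic nonabelian simple groups; if $N$ is abelian, then $G/C_G(N)$ acts on $N\setminus\{1\}$ and, by (ii), every orbit length (which is a class size) is divisible by $p$ or $q$, a strong restriction on the action that, together with (i), limits the possible $N$. Combining these with the behaviour of class sizes under $G\to G/N$ (sizes in the quotient divide those in $G$, and $N\cap Z(G)=1$), the goal is to reduce to $F^*(G)=S_1\times\cdots\times S_k$ with the $S_i$ nonabelian simple, and then, exploiting that (i) pins the $p$- and $q$-parts to the single values $P,Q$ across the factorization, to force $k=1$; thus $S\trianglelefteq G\le\Aut(S)$ is almost simple.

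For an almost simple $G$ with socle $S$ and $p>q>5$, I would then establish the two required assertions by the classification. In $S=A_n$ one exhibits a long cycle, or a cycle of prime length $r\in\{p,q\}$ on a suitable number of points, whose centralizer is cyclic of order prime to the other prime; for $S$ of Lie type one uses a regular semisimple element lying in a maximal torus of order prime to $p$ (built, say, from a Zsigmondy prime divisor), so that its centralizer is that torus; the finitely many sporadic groups are checked from their known class-size data. The restriction $p>q>5$ is precisely what guarantees, uniformly across the families, that such tori or cycles exist with the relevant centralizer avoiding the chosen prime, and it eliminates the small exceptional configurations. One must also check that passing from $S$ to $G\le\Aut(S)$ (diagonal, field and graph automorphisms) multiplies centralizer orders only by $\{p,q\}'$-numbers, so that coprimality to $p$ (respectively $q$) is preserved.

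The main obstacle is the reduction in the second step: controlling solvable (especially normal $p$- and $q$-) subgroups and collapsing the number of simple factors, because the hypothesis $\{p,q\}^*$ constrains only the $\{p,q\}$-parts of class sizes and is silent about other primes, so the classical Itô–Camina "direct product" conclusions for groups with few class sizes do not apply verbatim and must be adapted to this relative setting; the induction is further complicated by the fact that a quotient $G/N$ need not have trivial centre, so the inductive hypothesis has to be set up with care. By comparison, the classification bookkeeping in the third step is technical but routine once the element-with-coprime-centralizer statements are assembled, the only delicate points there being the simultaneous treatment of the two primes and the automorphism contributions for the Lie-type families.
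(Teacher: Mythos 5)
Your opening reduction is correct and matches the paper's own starting point: from $Z(G)=1$ one gets $|G||_p,|G||_q>1$, the condition $\{p,q\}^*$ then pins $|x^G|_p\in\{1,|G||_p\}$ and $|x^G|_q\in\{1,|G||_q\}$, no nontrivial class size is coprime to both $p$ and $q$, and the Theorem is equivalent to producing one element whose centralizer has order prime to $p$ and one whose centralizer has order prime to $q$. Everything after that, however, is a plan rather than a proof, and the plan breaks exactly where you flag the ``main obstacle.'' A minimal-counterexample induction via $G/N$ does not get off the ground: class sizes in $G/N$ only divide those in $G$, so $G/N$ need not satisfy $\{p,q\}^*$ (the $\{p,q\}$-part of a class size can drop to a value outside the admissible set for the quotient), $G/N$ need not have trivial center, and $p$ or $q$ may no longer divide $|G/N|$; you acknowledge these difficulties but do not resolve them. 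More seriously, the target of the reduction is wrong: a counterexample need not have any non-abelian composition factor, so ``collapse to $S\trianglelefteq G\le\Aut(S)$ and invoke the classification'' cannot be the core of the argument. The solvable configurations (Frobenius and double Frobenius quotients, normal $\{p,q\}$-subgroups) are where almost all of the work lies, and the sketch says nothing concrete about how conditions (i)--(ii) ``limit the possible $N$.'' The almost-simple endgame is likewise only asserted, and even granting it, it would not dispose of the general case.

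The paper's actual route is different and uses no minimal counterexample. Assuming $|G|_p>|G||_p$, it shows in succession that $|G|_q>|G||_q$, that every $p$-element has class size with full $q$-part and vice versa, that $C_G(P)=Z(P)$ and $C_G(Q)=Z(Q)$, and that the Sylow $p$- and $q$-subgroups are abelian (via the Navarro--Tiep criterion, which is where $q>5$ is used). The contradiction then comes from a long structural analysis showing that $C_G(P)$ or $C_G(Q)$ must nevertheless contain a $\{p,q\}'$-element: this runs through Hall $\{p,q\}$-subgroups of centralizers, the Williams--Kondratiev description of groups with disconnected prime graph, and a case analysis of Frobenius and double Frobenius quotients, with the classification entering only through the prime-graph machinery and one lemma on almost simple groups. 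Your proposal would have to supply essentially all of this missing structure theory; as written it establishes only the (correct) reformulation of the problem.
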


\begin{cor}
%Пусть $G$, $p$ и $q$ как в теореме. Тогда
%$C_G(g)\cap C_G(h)=1$ для любых $p$- и $q$-элементов $g$ и $h$ соответственно.
In the hypotheses of the theorem, $C_G(g)\cap C_G(h)=1$ for every $p$-element $g$ and every $q$-element $h$.
\end{cor}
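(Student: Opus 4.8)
The plan is to obtain the corollary as a short consequence of the theorem. The theorem asserts $|G|_{\{p,q\}}=|G||_{\{p,q\}}$, i.e.\ $|G|_p|G|_q=|G||_p|G||_q$; since $|G||_p$ divides $|G|_p$ and $|G||_q$ divides $|G|_q$, the equality of the products forces the factor-by-factor equalities $|G||_p=|G|_p$ and $|G||_q=|G|_q$. Writing $|G|_p=p^a$ and $|G|_q=q^b$, the two maximal values occurring in the definition of $\{p,q\}^*$ are therefore the full Sylow parts $p^a$ and $q^b$.

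Next I would unpack the hypothesis $G\in\{p,q\}^*$ in the light of these equalities. For every $\alpha\in N(G)$ we have $\alpha_{\{p,q\}}=\alpha_p\alpha_q\in\{p^a,q^b,p^aq^b\}$; as $\alpha_p$ is a power of $p$ dividing $p^a$ and $\alpha_q$ a power of $q$ dividing $q^b$, the only possibilities are $(\alpha_p,\alpha_q)\in\{(p^a,1),(1,q^b),(p^a,q^b)\}$. Reading this through $|x^G|=|G|/|C_G(x)|$, and using the trivial centre to guarantee that $N(G)$ is exactly the set of sizes of the nontrivial classes (so that the condition applies to every non-identity element), I get: for each $x\neq1$ one has $|C_G(x)|_p\in\{1,p^a\}$ and $|C_G(x)|_q\in\{1,q^b\}$, and the two parts cannot both be full simultaneously. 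Equivalently, no non-identity element of $G$ centralises at once a full Sylow $p$-subgroup and a full Sylow $q$-subgroup, since that would give $|x^G|_{\{p,q\}}=1\notin\{p^a,q^b,p^aq^b\}$.

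Finally I would prove the corollary by contradiction. Let $g$ be a nontrivial $p$-element and $h$ a nontrivial $q$-element, and suppose $1\neq z\in C_G(g)\cap C_G(h)$. Then $g,h\in C_G(z)$, so $p$ and $q$ both divide $|C_G(z)|$; by the dichotomy above this upgrades to $|C_G(z)|_p=p^a$ and $|C_G(z)|_q=q^b$, so $C_G(z)$ contains a full Sylow $p$-subgroup and a full Sylow $q$-subgroup. This contradicts the highlighted consequence of $\{p,q\}^*$ applied to the non-identity element $z$. Hence $C_G(g)\cap C_G(h)=1$.

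I expect the only real work to be the bookkeeping in the middle step---correctly reading off the $p$- and $q$-parts of the centraliser from the three admissible values of $\alpha_{\{p,q\}}$ and checking that the forbidden ``$(1,1)$'' case is exactly what rules out a common nontrivial centralising element. There is no deeper group-theoretic obstacle: the theorem has already done the heavy lifting by equating $|G||_{\{p,q\}}$ with $|G|_{\{p,q\}}$, and the hypotheses $p>q>5$ play no further role in this deduction, being consumed entirely in the proof of the theorem itself. The trivial-centre assumption, by contrast, is genuinely needed here, as a nontrivial central element would otherwise lie in every such intersection.
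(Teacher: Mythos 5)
Your proof is correct and follows essentially the same route as the paper: the paper likewise takes a nontrivial element $a$ in the intersection, observes that its centralizer then contains nontrivial $p$- and $q$-elements so that $|a^G|_{\{p,q\}}$ cannot attain any of the three values $|G||_p$, $|G||_q$, $|G||_{\{p,q\}}$ permitted by the $\{p,q\}^*$ condition once the theorem gives $|G||_{\{p,q\}}=|G|_{\{p,q\}}$, and concludes $a\in Z(G)=1$. Your componentwise bookkeeping $(\alpha_p,\alpha_q)\in\{(p^a,1),(1,q^b),(p^a,q^b)\}$ is in fact a slightly more careful rendering of the step the paper compresses into the inequality $|a^G|_{\{p,q\}}\leq|G|_{\{p,q\}}/(pq)$.
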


\section{Definitions and preliminary results}
%\begin{lem}\cite[Theorem 1]{camina72}\label{camin}
%Let $G$ be a group such that $p^a$ is the highest power of the prime
%$p$ which divides the index of an element of $G$. Assume that there is a $p$-element in
%$G$ whose index is precisely $p^a$. Then $G$ has a normal $p$-complement.
%\end{lem}
\begin{lem}[{\rm \cite[Lemma 1.4]{GorA2}}]\label{factorKh}
For a finite group $G$, take $K\unlhd G$ and put $\overline{G}= G/K$. Take $x\in G$ and $\overline{x}=xK\in G/K$.
The following claims hold:

(i) $|x^K|$ and $|\overline{x}^{\overline{G}}|$ divide $|x^G|$.

(ii) For neighboring members $L$ and $M$ of a composition series of $G$, with $L<M$, take $x\in M$  and
the image $\widetilde{x}=xL$ of $x$. Then $|\widetilde{x}^S|$ divides $|x^G|$, where $S=M/L$.

(iii) If $y\in G$ with $xy=yx$ and $(|x|,|y|)=1$ then $C_G(xy)=C_G(x)\cap C_G(y)$.

(iv) If $(|x|, |K|) = 1$ then $C_{\overline{G}}(\overline{x}) = C_G(x)K/K$.

(v) $\overline{C_G(x)}\leq C_{\overline{G}}(\overline{x})$.
\end{lem}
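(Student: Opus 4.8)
The plan is to treat the five assertions separately, using only that $|x^G|=[G:C_G(x)]$ together with the identities $C_K(x)=C_G(x)\cap K$ and $C_M(x)=C_G(x)\cap M$, and invoking the Schur--Zassenhaus theorem exactly once, for item (iv).

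I would dispose of (v) first: if $c\in C_G(x)$ then $cx=xc$ gives $\overline c\,\overline x=\overline x\,\overline c$, so $\overline{C_G(x)}=C_G(x)K/K\le C_{\overline G}(\overline x)$. For (i) I would argue by index arithmetic. By the second isomorphism theorem $|x^K|=[K:C_G(x)\cap K]=[KC_G(x):C_G(x)]$, and this divides $[G:C_G(x)]=|x^G|$. For the quotient, combine (v) with the computation $[\overline G:\overline{C_G(x)}]=[G:KC_G(x)]$, which divides $|x^G|$; since $C_{\overline G}(\overline x)\ge\overline{C_G(x)}$ by (v), the index $|\overline x^{\overline G}|=[\overline G:C_{\overline G}(\overline x)]$ divides $[\overline G:\overline{C_G(x)}]$ and hence $|x^G|$. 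Item (ii) is then a two-step corollary: applying (i) inside $M$ with its normal subgroup $L$ gives $|\widetilde x^{S}|\mid|x^M|$, and $|x^M|=[M:M\cap C_G(x)]=[MC_G(x):C_G(x)]$ divides $|x^G|$, so $|\widetilde x^S|\mid|x^G|$.

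For (iii) the inclusion $C_G(x)\cap C_G(y)\subseteq C_G(xy)$ is immediate. For the reverse I would use that $x,y$ commute and $(|x|,|y|)=1$, so $\langle xy\rangle=\langle x\rangle\times\langle y\rangle$ has order $|x|\,|y|$; by the Chinese remainder theorem $x$ and $y$ are each powers of $xy$, whence anything commuting with $xy$ commutes with both and $C_G(xy)\subseteq C_G(x)\cap C_G(y)$.

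The one substantive step is (iv), which I expect to be the main obstacle. By (v) it suffices to prove $C_{\overline G}(\overline x)\subseteq\overline{C_G(x)}$. Take $g$ with $\overline g\in C_{\overline G}(\overline x)$, i.e. $gxg^{-1}\in xK$, and work inside $H=K\langle x\rangle$. Since $(|x|,|K|)=1$ we have $K\cap\langle x\rangle=1$, so $K\unlhd H$ with $H=K\rtimes\langle x\rangle$ of order $|K|\,|x|$; both $\langle x\rangle$ and $\langle gxg^{-1}\rangle$ are complements of $K$ in $H$ (each has order $|x|$ coprime to $|K|$ and meets $K$ trivially). By the conjugacy part of Schur--Zassenhaus these complements are conjugate by some $k\in K$, so $\langle gxg^{-1}\rangle=\langle x\rangle^{k}$. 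Because the restriction of the quotient map to a complement is an isomorphism onto $H/K=\langle\overline x\rangle$, each complement contains a \emph{unique} element lying in the coset $xK$; as both $gxg^{-1}$ and $x^{k}=k^{-1}xk$ lie in $xK$ and generate the same complement, they coincide. Hence $gxg^{-1}=k^{-1}xk$, so $kg\in C_G(x)$ and $g\in KC_G(x)$, giving $\overline g\in\overline{C_G(x)}$. The delicate point is exactly this final matching of generators within $xK$: conjugacy of complements alone yields only $\langle gxg^{-1}\rangle=\langle x\rangle^k$, and it is the coprimality $(|x|,|K|)=1$ that upgrades this to the element-wise equality needed to place $g$ in $KC_G(x)$; everything else is index bookkeeping.
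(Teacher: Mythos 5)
The paper itself offers no proof of this lemma---it is quoted verbatim from \cite[Lemma 1.4]{GorA2}---so your argument has to stand on its own. Parts (i), (iii), (iv) and (v) are correct. Your treatment of (iv), the only nontrivial item, is sound: the conjugacy part of Schur--Zassenhaus does apply to $H=K\langle x\rangle$ because the complement $H/K\cong\langle\overline{x}\rangle$ is cyclic, hence solvable, and the observation that a complement meets the coset $xK$ in exactly one element is precisely what upgrades conjugacy of the subgroups $\langle gxg^{-1}\rangle$ and $\langle x\rangle^{k}$ to the element-wise identity $gxg^{-1}=x^{k}$, which is what places $g$ in $KC_G(x)$.

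There is, however, a genuine gap in (ii). You write $|x^M|=[M:M\cap C_G(x)]=[MC_G(x):C_G(x)]$ and conclude that this divides $[G:C_G(x)]$. That last step needs $MC_G(x)$ to be a subgroup of $G$, which is guaranteed when $M\unlhd G$ but not for a member of a composition series: such an $M$ is only subnormal in $G$. Without normality the assertion ``$[M:C_M(x)]$ divides $[G:C_G(x)]$ for $x\in M$'' is simply false for arbitrary subgroups. For instance, take $G=\mathrm{PSL}(2,7)$, $M\cong S_4$ a maximal subgroup, and $x\in M$ an involution lying in the $M$-class of size $6$; then $|x^M|=6$ while $|x^G|=21$, and indeed $|MC_G(x)|=48$ does not divide $168$. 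The repair uses exactly the subnormality you are ignoring: if $M=M_0\unlhd M_1\unlhd\cdots\unlhd M_k=G$ is the tail of the composition series, apply part (i) to each inclusion $M_j\unlhd M_{j+1}$ (with $x\in M\leq M_j$) to get that $|x^{M_j}|$ divides $|x^{M_{j+1}}|$, whence $|x^M|$ divides $|x^G|$ by transitivity; combined with your correct first step $|\widetilde{x}^S|$ divides $|x^M|$, this yields (ii).
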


\begin{lem}\cite[Theorem V.8.7]{Hup}\label{frgroup}
Let $G$ be a Frobenius group with kernel $A$ and complement $B$. Then
the following statements are true:

(i) $|B|$ divides $|A|-1$;

(ii) $A$ is nilpotent, and if the order of $B$ is even then $A$ is abelian;

(iii) the Sylow $p$-subgroups of the group $B$ are cyclic for odd $p$ and
cyclic or general quaternion groups for $p=2$.

(iv) Every subgroup of $B$ of order $pq$, where $p$ and $q$ are primes, is cyclic.
\end{lem}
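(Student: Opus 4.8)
The plan is to work throughout from the defining feature of the situation: in a Frobenius group $G=A\rtimes B$ with $A\unlhd G$ and $A\cap B=1$, the complement acts on the kernel by conjugation with no nontrivial fixed points, i.e. $C_A(b)=1$ for every $1\neq b\in B$ (equivalently $C_G(a)\le A$ for every $1\neq a\in A$). For (i) I would note that this makes $B$ act \emph{freely} on $A\setminus\{1\}$: if $1\neq b\in B$ stabilizes $1\neq a\in A$ then $a\in C_A(b)=1$, so every $B$-orbit on $A\setminus\{1\}$ has size exactly $|B|$. Counting the $|A|-1$ nontrivial elements gives $|B|\mid |A|-1$, and in particular $\gcd(|A|,|B|)=1$, a coprimality I would then use repeatedly.

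For (ii), the nilpotency of $A$ I would obtain from Thompson's theorem that a finite group admitting a fixed-point-free automorphism of prime order is nilpotent: any element of prime order in $B$ induces such an automorphism of $A$. For the even-order assertion, pick an involution $t\in B$; since $t$ acts fixed-point-freely the map $a\mapsto a^{-1}a^{t}$ is injective (if $a^{-1}a^{t}=c^{-1}c^{t}$ then $ca^{-1}$ is $t$-fixed, hence $=1$), so it is bijective, every element of $A$ has the form $a^{-1}a^{t}$, and a direct computation gives $(a^{-1}a^{t})^{t}=(a^{-1}a^{t})^{-1}$. Thus $t$ inverts every element of $A$, which forces $A$ abelian.

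Parts (iii) and (iv) I would drive by the single engine: no copy of $C_p\times C_p$ can act fixed-point-freely on a nontrivial group. Using $\gcd(|A|,|B|)=1$, coprime action lets me choose, for a given subgroup $V\le B$, a $V$-invariant Sylow $\ell$-subgroup of $A$ on which the action stays fixed-point-free, and then pass to its Frattini quotient, an elementary abelian $\ell$-group with $\ell\neq p$ carrying a fixed-point-free $V$-action. Decomposing this module over $\overline{\mathbb{F}_\ell}$ into one-dimensional characters of the abelian $V=C_p\times C_p$, every nontrivial character has nontrivial kernel (its image is cyclic while $V$ is not), so some $1\neq v\in V$ fixes a nonzero vector, a contradiction. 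Hence each Sylow $p$-subgroup of $B$ has a unique subgroup of order $p$, and the classical classification of such $p$-groups yields (iii): cyclic for odd $p$, cyclic or generalized quaternion for $p=2$. For (iv) the case $p=q$ is exactly this $C_p\times C_p$ exclusion, so it remains to rule out a nonabelian $H=C_q\rtimes C_p$. Reducing as above to an irreducible $\overline{\mathbb{F}_\ell}[H]$-module, the normal $C_q=\langle a\rangle$ acts with only nontrivial eigenvalues, and since $H$ is nonabelian a generator $b$ of $C_p$ cyclically permutes the corresponding eigenspaces in an orbit of length $p$; because $b^{p}=1$, on the span of such an orbit $b$ acts as a cyclic block shift whose eigenvalues are precisely the $p$-th roots of unity. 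In particular $1$ occurs, so $b$ fixes a nonzero vector, contradicting fixed-point-freeness and forcing $H$ cyclic.

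The main obstacle is part (ii): the nilpotency of the Frobenius kernel is genuinely deep, so I would quote Thompson's theorem as a black box rather than reprove it. Everything else is elementary, the most delicate point being the careful coprime reduction to a fixed-point-free elementary abelian section so that the character and eigenvalue arguments underlying (iii) and (iv) become available.
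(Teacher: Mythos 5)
The paper gives no proof of this lemma; it is quoted verbatim from Huppert (Satz V.8.7), so there is no internal argument to compare against. Your reconstruction is correct and is essentially the standard textbook proof: the free action of $B$ on $A\setminus\{1\}$ for (i), Thompson's fixed-point-free-automorphism theorem plus the $a\mapsto a^{-1}a^{t}$ inversion trick for (ii), and the exclusion of a fixed-point-free action of $C_p\times C_p$ (resp.\ of a nonabelian group of order $pq$) on an irreducible coprime module for (iii) and (iv), with the lift of fixed points through the Frattini quotient justified by the coprime-action lemma. Quoting Thompson's theorem as a black box is exactly what Huppert's treatment does as well; the argument is complete.
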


\begin{lem}\cite[Theorem 2]{camina72}\label{camin2}
Let $G$ be a finite group whose conjugacy classes have either $1$, $p^a$, $q^b$
or $p^a q^b$ elements, where $p$ and $q$ are primes and $a$ and $b$ are integers. If all of the values
actually occur, then $G$ is nilpotent.
\end{lem}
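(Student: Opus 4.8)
The final statement is Camina's nilpotency theorem, which I treat as a result to be proved rather than merely cited.

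\textbf{Setup.} First dispose of the degenerate case $p=q$: then $p^a$, $q^b$ and $p^aq^b$ are all powers of the single prime $p$, so every member of $N(G)$ is a $p$-power and It\^o's classical theorem on groups with prime-power class sizes already gives $G=P\times A$ with $P$ a Sylow $p$-subgroup and $A$ abelian and central, hence nilpotent. So assume $p\neq q$. The plan is to peel off the $\{p,q\}'$-part as a central direct factor, reduce to a $\{p,q\}$-group, and then show that group is the direct product of its Sylow $p$- and $q$-subgroups; the engine throughout is Lemma~\ref{factorKh}(iii), which splits the centralizer of a commuting product of elements of coprime order.

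\textbf{Reduction to a $\{p,q\}$-group.} Every element of $N(G)$ lies in $\{p^a,q^b,p^aq^b\}$, so for each prime $r\in\pi(G)\setminus\{p,q\}$ the prime $r$ divides no conjugacy class size. By the classical fact that a prime dividing no class size forces a central Sylow subgroup, each such Sylow $r$-subgroup lies in $Z(G)$; their product is a central Hall $\{p,q\}'$-subgroup $C$, and Schur--Zassenhaus together with centrality give $G=K\times C$ for a normal Hall $\{p,q\}$-subgroup $K$. Splitting off a central direct factor leaves every class size unchanged, so $N(K)=N(G)$ with all four values still occurring, and $G$ is nilpotent if and only if $K$ is. Thus I may assume $|G|=p^{\alpha}q^{\beta}$.

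\textbf{Local structure and the decomposition.} Fix $P\in\mathrm{Syl}_p(G)$ and $Q\in\mathrm{Syl}_q(G)$. For $x\in C_G(P)$ the index $|x^G|$ is prime to $p$, hence $|x^G|\in\{1,q^b\}$, so every $C_G(P)$-class size is a power of $q$ and It\^o's theorem makes $C_G(P)$ nilpotent; symmetrically $C_G(Q)$ is nilpotent. To finish I aim to prove $[P,Q]=1$ together with $P,Q\unlhd G$, for then $P\cap Q=1$ and $|PQ|=|G|$ give $G=P\times Q$, and It\^o's two-class theorem applied to each factor (whose class sizes lie in $\{1,p^a\}$, respectively $\{1,q^b\}$) yields nilpotency. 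The route to $[P,Q]=1$ is to take $g$ with $|g^G|=p^aq^b$, write $g=g_pg_q$ with commuting $p$- and $q$-parts, and apply Lemma~\ref{factorKh}(iii) to get $C_G(g)=C_G(g_p)\cap C_G(g_q)$; comparing the $p$- and $q$-parts of $|g^G|$ with those of $|g_p^G|$ and $|g_q^G|$ should force $g_p$ to have a $p$-power class size and $g_q$ a $q$-power class size, thereby separating the two primes.

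\textbf{Main obstacle.} The genuine difficulty is upgrading these element-wise centralizer statements to global normality of the Sylow subgroups, equivalently showing that $A=\{x\in G:p\nmid|x^G|\}$ and $B=\{x\in G:q\nmid|x^G|\}$ are normal subgroups meeting in $Z(G)$ with $G=AB$. The obstruction is the usual conjugacy ambiguity: an element whose class size is prime to $p$ centralizes \emph{some} Sylow $p$-subgroup, not necessarily the fixed $P$, so $A$ is not visibly closed under multiplication. I would resolve this by assuming, say, $P$ is not normal and extracting a Frobenius action of a $q$-section on a $p$-section; Lemma~\ref{frgroup}, in particular the divisibility $|B|\mid|A|-1$ of part~(i) and the cyclicity constraints of parts~(iii)--(iv), then conflicts with the requirement that all four class sizes $1,p^a,q^b,p^aq^b$ occur simultaneously, giving a contradiction. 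This Frobenius-exclusion step, and the bookkeeping over which Sylow subgroup sits inside which centralizer, is where I expect essentially all of the work to lie.
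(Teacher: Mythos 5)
First, a point of comparison: the paper offers no proof of this lemma at all — it is quoted verbatim from Camina \cite[Theorem 2]{camina72} as an external result — so there is no internal argument to measure yours against; the only question is whether your argument stands on its own.

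It does not, and you essentially say so yourself. Your preliminary reductions are sound: the degenerate case $p=q$ via the Baer--It\^o theorem on prime-power class sizes, and the splitting $G=K\times C$ with $C$ a central Hall $\{p,q\}'$-subgroup (a prime dividing no class size yields a central Sylow subgroup, e.g.\ via Lemma~\ref{hz2}), are correct, and the claim that $C_G(P)$ has $q$-power class sizes can be repaired by noting that $Z(P)\in Syl_p(C_G(P))$ lies in $C_{C_G(P)}(x)$ for every $x\in C_G(P)$ (the bare divisibility $|H:H\cap K|\mid |G:K|$ you implicitly use is false for non-normal $H$). But from that point on you have a plan, not a proof. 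The step where you take $g$ with $|g^G|=p^aq^b$, write $g=g_pg_q$, apply Lemma~\ref{factorKh}(iii), and assert that comparing $p$- and $q$-parts ``should force'' $|g_p^G|$ to be a $p$-power has no justification as stated: a nontrivial $p$-element can perfectly well have class size $q^b$ — that only means it lies in the centre of some Sylow $p$-subgroup — so the two primes do not separate for free. And the decisive step, upgrading ``every element of class size prime to $p$ centralizes \emph{some} Sylow $p$-subgroup'' to normality of $P$ and $Q$ (equivalently, that your sets $A$ and $B$ are subgroups), is precisely the content of Camina's theorem; you defer it entirely to an unexecuted ``Frobenius-exclusion'' argument invoking Lemma~\ref{frgroup}, conceding that this is ``where essentially all of the work'' lies. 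Until that step is actually carried out, the proposal is a correct framing of the problem rather than a proof of it.
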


\begin{lem}\label{hz2}
Take $g\in G$. If each conjugacy class of $G$ contains an element $h$ such that $g\in C_G(h)$ then $g\in Z(G)$.
\end{lem}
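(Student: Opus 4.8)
The plan is to reformulate the hypothesis as a statement about the single subgroup $H=C_G(g)$ and then to show that a subgroup which meets every conjugacy class of $G$ must be all of $G$; applied here this forces $C_G(g)=G$, which is exactly the conclusion $g\in Z(G)$. The whole argument is a counting argument on the conjugates of $H$, so there is essentially no structural input required beyond the partition of $G$ into conjugacy classes.

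First I would rewrite the condition ``$g\in C_G(h)$''. Since $gh=hg$ is symmetric, it is equivalent to $h\in C_G(g)$. Hence the hypothesis says precisely that each conjugacy class $x^G$ contains an element of $H=C_G(g)$, i.e. $x^G\cap H\neq\emptyset$ for every $x\in G$. Because the conjugacy classes partition $G$, this means that every element of $G$ is conjugate into $H$, so that
\[
G=\bigcup_{k\in G}k^{-1}Hk,
\]
the union of all $G$-conjugates of $H$.

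Next I would run the standard covering estimate. The number of distinct conjugates of $H$ equals $[G:N_G(H)]$, and since $H\le N_G(H)$ this is at most $[G:H]$. Each conjugate contains the identity, so the union overcounts the identity and one obtains
\[
\Bigl|\bigcup_{k\in G}H^{k}\Bigr|\le [G:N_G(H)]\,(|H|-1)+1\le [G:H]\,(|H|-1)+1=|G|-[G:H]+1 .
\]
If $H$ were a proper subgroup then $[G:H]\ge 2$, whence the right-hand side is at most $|G|-1$, contradicting $G=\bigcup_{k}H^{k}$. Therefore $H=C_G(g)=G$, i.e. $g$ commutes with every element of $G$, so $g\in Z(G)$.

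I do not expect a genuine obstacle in this proof; the only point that needs a little care is the inequality bounding the size of the union, where one must account both for the shared identity element among the conjugates and for the fact that $N_G(H)$ may be strictly larger than $H$ (which only helps, since it reduces the number of conjugates). The underlying fact being used is the classical one that no proper subgroup of a finite group can meet every conjugacy class.
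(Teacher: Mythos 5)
Your proof is correct and rests on the same classical fact the paper invokes (that a proper subgroup of a finite group cannot meet every conjugacy class, equivalently that any set of class representatives generates $G$); the paper simply cites this fact in one line, while you supply the standard covering/counting proof of it. The reformulation $g\in C_G(h)\iff h\in C_G(g)$ and the conclusion $C_G(g)=G$ match the intended argument exactly.
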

\begin{proof}
The assertion follows from the fact that a finite group generated by any set of representatives of conjugacy classes.
\end{proof}

The prime graph $GK(G)$ of $G$ is defined as follows. The vertex set is $\pi(G)$ and two distinct primes $r, s\in\pi(G)$ considered as vertices of the graph are adjacent if and only if there exists element $g\in(G)$ such that $|g|=rs$. Denote by $\pi_i(G)$ the set of vertices of the $i$-th prime graph component of $G$. If $G$ has even order
then we always assume that $2\in \pi_1(G)$.

\begin{lem}\cite[Theorem A]{Wil}\label{GK}
If a finite group $G$ has disconnected prime graph, then one of the
following conditions holds:
\begin{enumerate}
\item[(a)]{$s(G)=2$ and $G$ is a Frobenius or 2-Frobenius group;}
\item[(b)]{there is a non abelian simple group $S$ such that $S\leq G = G/F(G)\leq Aut(S)$, where $F(G)$ is the
maximal normal nilpotent subgroup of $G$; moreover, $F(G)$ and $G/S$ are $\pi_1(G)$-subgroups, $s(S)\geq s(G)$,
and for every $i$ with $2\leq i\leq s(G)$ there is $j$ with $2\leq j\leq s(S)$ such that $\pi_i(G)=\pi_j (S)$.}
\end{enumerate}
\end{lem}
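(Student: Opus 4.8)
The plan is to prove this structure theorem by induction on $|G|$, organised around one elementary but decisive observation about centralisers. Write $\pi_1,\dots,\pi_s$ for the connected components of $GK(G)$, with $s=s(G)\ge 2$ and $2\in\pi_1$ when $|G|$ is even. The key remark is: if $p\in\pi_i$ with $i\ge 2$ and $x$ is a nontrivial $p$-element, then $C_G(x)$ is a $\pi_i$-group. Indeed, any prime $r$ dividing $|C_G(x)|$ is realised by an $r$-element $y\in C_G(x)$, which after replacing $y$ by a power we may take to have order $r$; then $xy=yx$ have coprime orders, so $xy$ has order $pr$, whence $r$ is adjacent to $p$ and so $r\in\pi_i$. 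The same reasoning shows that two elements of coprime orders lying in distinct components can never commute. These facts make every non-principal component $\pi_i$ behave like the prime set of a Frobenius complement or kernel, and they are the engine of the whole argument.

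First I would settle the solvable case, aiming at alternative (a). If $G$ is solvable, Hall $\pi$-subgroups exist for every set of primes $\pi$, so I can fix $i\ge 2$ and choose a Hall $\pi_i$-subgroup together with a complementary Hall $\pi_i'$-subgroup. By the centraliser remark their mutual action on suitable chief factors is forced to be fixed-point-free: a nontrivial $\pi_i'$-element fixing a nontrivial $\pi_i$-element would lie in a $\pi_i$-centraliser, which is impossible. Chasing a chief series and invoking Lemma~\ref{frgroup} to control the resulting Frobenius complements, I would produce a normal series exhibiting $G$ as Frobenius or $2$-Frobenius; the same fixed-point-free phenomenon forbids a third isolated component, giving $s(G)=2$. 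This part is technically fussy but conceptually routine.

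The substantive case is $G$ nonsolvable, with target alternative (b). I would work with the generalised Fitting subgroup $F^*(G)=F(G)E(G)$, where $F(G)$ is the Fitting subgroup and $E(G)$ is the layer, using the fundamental fact $C_G(F^*(G))\le F^*(G)$. Two claims drive the argument. First, $F(G)$ is a $\pi_1$-group: if a prime $p\in\pi_i$ with $i\ge 2$ divided $|F(G)|$, then $O_p(G)\ne 1$ is normal, and the coprime action of a Sylow subgroup for some $r\in\pi_1$ on $O_p(G)$ would have to be fixed-point-free — otherwise a nontrivial fixed point would be a $p$-element centralised by an $r$-element, contradicting the centraliser remark — and such an action drives $G$ back into the Frobenius/$2$-Frobenius regime of case (a). Second, $E(G)$ is nontrivial (as $G$ is nonsolvable) and has a single component: a layer with two components $L_1,L_2$ would supply commuting elements of coprime orders drawn from $\pi(L_1)$ and $\pi(L_2)$, forcing these prime sets into one component and, since $F(G)$ is a $\pi_1$-group, collapsing the non-principal components — contrary to $s\ge 2$.

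With a single component $L=E(G)$ and $S=L/Z(L)$ nonabelian simple, the relation $C_G(F^*(G))\le F^*(G)$ together with $F(G)$ being a $\pi_1$-group yields $S\le G/F(G)\le\operatorname{Aut}(S)$ and forces $G/S$ and $F(G)$ to be $\pi_1$-groups, the outer data $G/S$ embedding into $\operatorname{Out}(S)$ up to the $\pi_1$-contribution of $F(G)$. The component correspondence is then formal: every prime of a non-principal component $\pi_i$, $i\ge 2$, must divide $|S|$, and transporting the centraliser remark into $S$ through Lemma~\ref{factorKh} shows $\pi_i$ is isolated in $GK(S)$ as well, so it is a union — indeed, by an irreducibility argument, exactly one — of components of $S$; hence $s(S)\ge s(G)$ and $\pi_i(G)=\pi_j(S)$ for a suitable $j$. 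I expect the genuine obstacle to be this nonsolvable socle analysis: cleanly separating the fixed-point-free (Frobenius, case (a)) behaviour from the almost-simple (case (b)) behaviour, and controlling how the primes of each $\pi_i$ are distributed among $F(G)$, $S$ and $\operatorname{Out}(S)$. It is worth emphasising that this structure theorem is itself free of the classification of finite simple groups; CFSG enters only afterwards, to enumerate the simple groups whose prime graph is actually disconnected.
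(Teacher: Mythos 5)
This lemma is the Gruenberg--Kegel theorem; the paper gives no proof of it at all but imports it verbatim from Williams \cite{Wil}, so there is no internal argument to compare yours against --- your proposal has to stand on its own. Its opening observation is correct and really is the engine of the known proof: if $p\in\pi_i$ with $i\ge 2$, the centraliser of every nontrivial $p$-element is a $\pi_i$-group, hence elements of coprime orders taken from distinct components never commute. The solvable half of your plan follows the standard route, though you leave essentially all of the work (``chasing a chief series'') undone.

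The nonsolvable half, however, contains a genuine error. The claim ``$E(G)$ is nontrivial (as $G$ is nonsolvable)'' is false, and the counterexamples occur precisely under the hypotheses of the lemma. Let $G=K\rtimes H$ be a Frobenius group with nontrivial nilpotent kernel $K$ and nonsolvable complement $H$ involving $SL(2,5)$ (such groups exist, e.g.\ $K=\mathbb{F}_{29}^{\,2}$ with $SL(2,5)\le SL(2,29)$ acting fixed-point-freely). Then $GK(G)$ is disconnected with $\pi_2(G)=\pi(K)$, $G$ is nonsolvable, $F(G)=K$, and any component $L$ would satisfy $[L,K]\le [E(G),F(G)]=1$, whence $L\le C_G(K)\le K$, which is absurd; so $E(G)=1$. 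The same example shows that your organising dichotomy ``solvable $\Rightarrow$ (a), nonsolvable $\Rightarrow$ (b)'' does not match the theorem: nonsolvable Frobenius groups land in case (a), and for them $F(G)$ is a $\pi_2$-group, not a $\pi_1$-group. You do leave yourself an escape hatch --- ``such an action drives $G$ back into the Frobenius/$2$-Frobenius regime'' when some $O_p(G)$ with $p\in\pi_i$, $i\ge2$, is nontrivial --- but that one sentence is exactly where the hard content of the theorem lives: one must prove that a nontrivial normal $\pi_i$-subgroup on which all $\pi_i'$-elements act fixed-point-freely forces the full Frobenius or $2$-Frobenius structure of a possibly nonsolvable $G$, and nothing in the sketch does this. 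Likewise, the assertion that two components of $E(G)$ would ``collapse the non-principal components'' is not argued: commuting components only show $\pi(L_1)\cup\pi(L_2)$ lies in a single component of $GK(G)$, which by itself contradicts nothing. In short, the proposal correctly identifies the right tools (the centraliser remark, $F^*(G)$, fixed-point-free actions, and the fact that CFSG is not needed), but as written it is an outline resting on at least one false intermediate claim and on unproved steps that constitute the substance of the theorem.
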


\begin{lem}\cite[Theorem 1]{vse}\label{pat}
Let $G$ be a finite group, and let $p$ and $q$ be different primes. Then
some Sylow $p$-subgroup of $G$ commutes with some Sylow $q$-subgroup of $G$ if and only
if the class sizes of the $q$-elements of $G$ are not divisible by $p$ and the class sizes of
the $p$-elements of $G$ are not divisible by $q$.
\end{lem}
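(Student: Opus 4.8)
My plan is to prove the two implications separately; the forward implication is routine and the reverse is the real content. For the forward direction, assume $[P,Q]=1$ for some $P\in\mathrm{Syl}_p(G)$, $Q\in\mathrm{Syl}_q(G)$. Since class size is a conjugacy invariant and every $q$-element is conjugate into $Q$, it suffices to check $y\in Q$: then $P\le C_G(y)$, so $|C_G(y)|_p=|G|_p$ and $p\nmid|y^G|$; symmetrically $q\nmid|x^G|$ for every $p$-element $x$.

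For the reverse direction I would induct on $|G|$, abbreviating the hypotheses as (H1) ``$p\nmid|y^G|$ for every $q$-element $y$'' and (H2) ``$q\nmid|x^G|$ for every $p$-element $x$''; if $p$ or $q$ fails to divide $|G|$ the claim is trivial, so assume both divide $|G|$. The key is a reduction through the Fitting subgroup $F(G)$. Suppose first $O_q(G)\neq1$. As $O_q(G)$ lies in every Sylow $q$-subgroup, (H2) forces $O_q(G)\le C_G(x)$ for every $p$-element $x$, so $[P,O_q(G)]=1$ for all $P\in\mathrm{Syl}_p(G)$. The hypotheses pass to $\overline G=G/O_q(G)$ (a $p$- or $q$-element of $\overline G$ lifts to one of the same type, and $|\overline x^{\overline G}|\mid|x^G|$ by Lemma~\ref{factorKh}(i)), so by induction $\overline G$ has a commuting pair $\overline P\times\overline Q$. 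Its preimage $M$ is then a Hall $\{p,q\}$-subgroup in which $PO_q(G)=P\times O_q(G)\trianglelefteq M$, whence $P\trianglelefteq M$ and $[P,Q_M]\le P\cap O_q(G)=1$ for $Q_M\in\mathrm{Syl}_q(M)$; thus $P$ and $Q_M$ are commuting Sylow subgroups of $G$. The case $O_p(G)\neq1$ is symmetric, and when $O_r(G)\neq1$ for some $r\notin\{p,q\}$ I would instead pass to $G/O_{\{p,q\}'}(G)$ and lift a commuting pair across the normal $\{p,q\}'$-kernel by Schur--Zassenhaus. This leaves the case $F(G)=1$.

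If $F(G)=1$ then $F^*(G)=\mathrm{Soc}(G)=S_1\times\cdots\times S_t$ is a direct product of nonabelian simple groups with $C_G(\mathrm{Soc}(G))=1$, so $\mathrm{Soc}(G)\le G\le\Aut(\mathrm{Soc}(G))$, and by Lemma~\ref{factorKh}(ii) each simple factor $S_i$ again satisfies (H1) and (H2). The hard part — and the step I expect to be the main obstacle — is the ensuing statement about simple groups: for a nonabelian simple group $S$ with $p,q\in\pi(S)$ the conditions (H1) and (H2) cannot both hold, since one can always exhibit a $q$-element of $S$ whose class size is divisible by $p$. This is where the classification of finite simple groups is needed. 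Granting it, each $S_i$ is divisible by at most one of $p$ and $q$, so the Sylow $p$- and $q$-subgroups of $\mathrm{Soc}(G)$ sit in distinct factors and commute; a concluding analysis of the outer part $G/\mathrm{Soc}(G)\le\Out(\mathrm{Soc}(G))$, including the permutation of isomorphic factors, then promotes this to a commuting pair of Sylow subgroups of $G$, completing the induction.
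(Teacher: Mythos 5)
The paper offers no proof of this lemma at all: it is quoted verbatim as Theorem~1 of the cited work of Beltr\'an, Felipe, Malle, Moret\'o, Navarro, Sanus, Solomon and Tiep \cite{vse}, so there is no internal argument to compare yours against. Your forward implication and the reduction through the Fitting subgroup (handling $O_q(G)$, $O_p(G)$ and $O_{\{p,q\}'}(G)$, passing the hypotheses to quotients via Lemma~\ref{factorKh}, and lifting a commuting pair back) are sound and are indeed the routine part.

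The genuine gap is that the simple-group statement you isolate as the crux is false as stated. It is not true that a nonabelian simple group $S$ with $p,q\in\pi(S)$ must have a $q$-element whose class size is divisible by $p$. Take $S=\mathrm{PSL}_2(16)$ with $\{p,q\}=\{3,5\}$: the cyclic maximal torus of order $15=2^4-1$ is a Hall $\{3,5\}$-subgroup, it is self-centralizing, and every nontrivial $3$- or $5$-element has class size $272=2^4\cdot 17$, so both of your conditions (H1) and (H2) hold while $3$ and $5$ both divide $|S|$. (The phenomenon persists for large primes, e.g.\ $\mathrm{PSL}_2(2^{11})$ with $\{23,89\}$, since $2^{11}-1=23\cdot 89$.) What is actually true, and what \cite{vse} proves using the classification, is a description of exactly which simple groups admit a nilpotent Hall $\{p,q\}$-subgroup; in those exceptional cases the Sylow $p$- and $q$-subgroups of $S$ do commute, so your desired conclusion survives, but not via the dichotomy you assert --- the simple factors need not split cleanly into ``$p$-only'' and ``$q$-only'' ones. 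Relatedly, your final step (promoting commuting Sylow subgroups of $\mathrm{Soc}(G)$ to commuting Sylow subgroups of $G$ through $\Out(\mathrm{Soc}(G))$ and the permutation action on the factors) is only gestured at, and since Sylow subgroups of $G$ need not lie in the socle this is a substantial piece of the argument in \cite{vse}, not a formality.
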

%\begin{lem}\cite[Lemma 5.3.4]{Gore}\label{gore}
%Let $A\times B$ be a group of automorphisms of the $p$-group $P$ with $A$ a
%$p'$-group and $B$ a $p$-group. If $A$ acts trivially on $C_P(B)$, then $A=1$.
%\end{lem}
\begin{lem}\cite{navarro}\label{navarro}
Let $G$ be a finite group and $p$ a prime, $p\not\in\{3, 5\}$. Then $G$ has abelian Sylow $p$-subgroups if and only if $|x^G|_p=1$ for all $p$-elements $x$ of $G$.
\end{lem}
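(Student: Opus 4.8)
The statement is a biconditional; write $(\ast)$ for the hypothesis that $|x^G|_p=1$ for every $p$-element $x$ of $G$. The forward implication is elementary and requires no restriction on $p$: if a Sylow $p$-subgroup $P$ is abelian and $x$ is a $p$-element, then some conjugate of $P$ contains $x$ and hence centralizes it, so $|C_G(x)|_p=|G|_p$ and $|x^G|_p=1$. For the converse, the first thing I would record is the local reformulation of $(\ast)$: for a $p$-element $x$ one has $|x^G|_p=1$ if and only if $x$ lies in the centre of some Sylow $p$-subgroup of $G$. Indeed, $|x^G|_p=1$ forces $C_G(x)$ to contain a full Sylow $p$-subgroup $S$ of $G$; since $x$ is a $p$-element centralizing $S$, the group $\langle S,x\rangle$ is a $p$-group containing $S$, whence $x\in S$ and then $x\in Z(S)$. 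The reverse is immediate. Thus $(\ast)$ says exactly that every $p$-element of $G$ is \emph{$p$-central}, and the task reduces to showing that this forces $P$ abelian when $p\notin\{3,5\}$.

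I would prove the converse by taking a counterexample $G$ of least order and peeling off normal subgroups. Condition $(\ast)$ passes to quotients by Lemma~\ref{factorKh}(i): for a $p$-element $\overline{x}$ of $\overline{G}=G/N$ with $N$ a $p'$-group, a $p$-element lift $x$ exists by coprimality, and $|\overline{x}^{\overline{G}}|_p$ divides $|x^G|_p=1$. Since factoring out $O_{p'}(G)$ also leaves the Sylow $p$-subgroup unchanged, minimality lets me assume $O_{p'}(G)=1$. If $G$ is $p$-solvable the statement can be settled by an inductive argument on the chief series using coprime action, and here no restriction on $p$ is needed; so the minimal counterexample is not $p$-solvable. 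Analysing the generalized Fitting subgroup $F^*(G)$ and its components, and using minimality to control normal $p$-subgroups and the action of $G$ on $F^*(G)$, I would reduce to the situation where $G$ is almost simple with non-abelian simple socle $S$ of order divisible by $p$.

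It then remains to classify, via the classification of finite simple groups, the pairs $(S,p)$ with $S$ non-abelian simple, $p\mid |S|$, the Sylow $p$-subgroups non-abelian, and yet every $p$-element $p$-central. For $S$ alternating or sporadic this is a finite check against known Sylow data and class-size lists. The substantive work is for $S$ of Lie type: in the defining characteristic a Sylow $p$-subgroup is a maximal unipotent subgroup $U$, and one argues that $U$ abelian is equivalent to $(\ast)$ by exhibiting non-$p$-central root elements whenever $U$ is non-abelian; in cross characteristic one uses the description of Sylow $p$-subgroups of Lie type groups to the same effect. The conclusion of this analysis is that non-abelian Sylow $p$-subgroups with all $p$-elements $p$-central occur only for $p\in\{3,5\}$, contradicting $p\notin\{3,5\}$.

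The main obstacle is exactly this last step: the CFSG-based case analysis for groups of Lie type, where pinning down the precise structure of Sylow $p$-subgroups and their centralizers—and thereby isolating the two exceptional primes—is the delicate part, whereas the reductions of the second paragraph and the equivalences of the first are routine.
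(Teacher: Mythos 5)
This lemma is not proved in the paper at all: it is quoted verbatim from Navarro and Tiep, \emph{Abelian Sylow subgroups in a finite group} (J.\ Algebra 398 (2014)), and is used here purely as a black box. So there is no internal proof to measure you against; the only question is whether your sketch would stand on its own, and it does not. Your first two paragraphs are fine: the forward implication, the reformulation of $(\ast)$ as ``every $p$-element lies in the centre of some Sylow $p$-subgroup,'' and the passage of $(\ast)$ to quotients are all correct and routine. The genuine gap is that everything of substance is concentrated in the step you explicitly defer: the reduction of a minimal counterexample to an almost simple group and the subsequent classification, via CFSG, of the simple groups $S$ and primes $p$ for which all $p$-elements are $p$-central yet the Sylow $p$-subgroups are non-abelian. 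That analysis is the entire content of the Navarro--Tiep theorem; it is where the exceptional primes $3$ and $5$ actually emerge (there exist simple groups with non-abelian Sylow $3$- and $5$-subgroups all of whose $p$-elements have class size prime to $p$), and nothing in your sketch explains why $7$ and larger primes cannot occur, nor why $p=2$ is unproblematic. ``Exhibiting non-$p$-central root elements whenever $U$ is non-abelian'' and the cross-characteristic claim are assertions, not arguments, and the reduction through $F^*(G)$ and its components (which in the published proof requires control of quasi-simple components, not just an almost simple socle) is likewise only gestured at.

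To be clear, this is not a criticism of your strategy, which matches the broad architecture of the real proof; it is that the proposal proves only the elementary half and the elementary reductions, and replaces the theorem's actual content with a description of what would need to be checked. For the purposes of this paper the correct move is the one the author makes: cite the result. If you wanted a self-contained argument you would have to reproduce the Lie-type case analysis in detail, which is far beyond a lemma-length proof.
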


Given a set $\pi$ of primes, a finite group is said to have property $D_{\pi}$ whenever it includes a Hall $\pi$-subgroup and all its Hall $\pi$-subgroups are conjugate. For brevity, we write $G\in D_{\pi}$ when a group $G$ has property $D_{\pi}$.

\begin{lem}{\rm \cite{HallExB}}\label{HallEx}
Let $G$ be a group and $\pi$ be a set of prime numbers. If $G$ has a nilpotent
Hall $\pi$-subgroup then $G\in D_{\pi}$.
\end{lem}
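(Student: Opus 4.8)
The plan is to prove the stronger statement that \emph{every} $\pi$-subgroup of $G$ is contained in a conjugate of the given nilpotent Hall $\pi$-subgroup $H$; this yields at once that all Hall $\pi$-subgroups are conjugate, hence (with existence, which is hypothesized) that $G\in D_\pi$. So I would fix $H$ and an arbitrary $\pi$-subgroup $K$ and argue by induction on $|G|$ that $K\le H^g$ for some $g$. If $H=G$ there is nothing to prove, so assume $H<G$, i.e. $\pi'\cap\pi(G)\neq\varnothing$; I may also assume $O_\pi(G)=1$, since otherwise I reduce modulo a minimal normal $\pi$-subgroup as in the next step.

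The first step is a reduction through a minimal normal subgroup $N$. If $N=G$ then $G$ is simple and we are in the hard case below, so assume $N<G$ and pass to $\overline{G}=G/N$. The image $\overline{H}=HN/N$ is again a nilpotent Hall $\pi$-subgroup and $\overline{K}=KN/N$ is a $\pi$-subgroup, so by induction $\overline{K}\le\overline{H}^{\,\overline{g}}$, giving $KN\le H^gN=:M$. Here $H^g$ is a nilpotent Hall $\pi$-subgroup of $M$ and $K\le M$, so if $M<G$ the inductive hypothesis applied to $M$ finishes the argument; in particular this disposes of the case where $N$ is a $\pi$-group, since then $N\le H^g$ and $M=H^g<G$. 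The remaining possibility is $M=G$, i.e. $G=HN$: if $N$ is a $\pi$-group this forces $G=H$, contradicting $H<G$, while if $N$ is a $\pi'$-group then $N$ is a normal Hall $\pi'$-subgroup and the Schur--Zassenhaus theorem (conjugacy of complements, together with its standard strengthening that every $\pi$-subgroup lies in a complement) gives $K\le H^g$.

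The main obstacle is therefore the single configuration $G=HN$ with $N$ a minimal normal subgroup that is neither a $\pi$- nor a $\pi'$-group. Then $N$ is nonabelian characteristically simple, $N\cong S^m$ with $S$ nonabelian simple, $H\cap N$ is a nilpotent Hall $\pi$-subgroup of $N$, and $G/N\cong H/(H\cap N)$ is a \emph{nilpotent} $\pi$-group. Using $O_\pi(G)=1$ and $Z(N)=1$ one gets $C_G(N)=1$, so $N\le G\le\Aut(N)$. I would attack this by a Frattini argument: choosing $p\in\pi\cap\pi(N)$ and letting $P_0$ be the Sylow $p$-subgroup of the nilpotent group $H\cap N$ (characteristic there, hence $H$-invariant), one obtains $H\le L:=N_G(P_0)$ and $G=N\,N_G(P_0)$, with $L<G$ because $P_0$ is not normal in $G$. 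The inductive hypothesis then makes $L$ a $D_\pi$-group.

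The genuine difficulty is to transport an arbitrary $\pi$-subgroup $K$ into (a conjugate of) $L$, which amounts to reducing the problem from $N=S^m$ to the single factor $S$ and then settling conjugacy of nilpotent Hall $\pi$-subgroups inside the simple group $S$ itself. The wreath-product bookkeeping for $S^m\to S$ is controlled by the fact that the image of $G$ in the factor-permutation group $S_m$ is a transitive nilpotent $\pi$-group, and the concluding simple-group statement is exactly the core of Wielandt's theorem. I expect this to be the step requiring essentially all of the work, to be handled either by Wielandt's original normalizer-and-counting argument (which avoids the classification) or, if one admits it, by direct inspection of the finite simple groups possessing a nilpotent Hall $\pi$-subgroup. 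Once conjugacy is established for $N$, combining it with the nilpotent $\pi$-quotient $G/N$ through the Frattini factorization $G=NL$ and the $D_\pi$-property of $L$ produces the desired $g$ with $K\le H^g$, closing the induction.
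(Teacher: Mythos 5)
The paper offers no proof of this lemma at all: it is quoted verbatim from Wielandt's 1954 note \emph{Zum Satz von Sylow} and used as a black box. So the only question is whether your attempt actually constitutes a proof, and it does not. Your reductions (passing to $G/N$ for a minimal normal subgroup $N$, disposing of the cases where $N$ is a $\pi$-group or a $\pi'$-group via induction and Schur--Zassenhaus, and the Frattini argument producing $L=N_G(P_0)<G$ with $H\le L$) are all correct, but they only peel away the routine outer layers. The step you label ``the genuine difficulty'' --- transporting an arbitrary $\pi$-subgroup $K$ into a conjugate of $L$, equivalently establishing the conjugacy statement inside the nonabelian socle $S^m$ --- is the entire content of the theorem, and you resolve it by appealing to ``Wielandt's original normalizer-and-counting argument'' or to inspection of the simple groups. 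Citing the theorem's own proof at the decisive moment is circular; citing the classification is a vastly stronger tool than the statement requires and you do not carry out that inspection either. As it stands the argument proves nothing beyond the reductions.

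It is also worth noting that your architecture steers into the hardest possible terrain unnecessarily. Wielandt's actual proof never passes through minimal normal subgroups or simple groups: one inducts on the $\pi$-subgroup $K$, takes a maximal normal subgroup $N\trianglelefteq K$ with $|K/N|=q\in\pi$, conjugates $N$ into $H$ by induction, and then uses the nilpotency of $H$ in the form $H=Q\times R$ ($Q$ the Sylow $q$-subgroup, $R$ the normal $q$-complement) to see that $D=N\cap R$ is characteristic in $N$, hence normalized by $K$, while $H\le N_G(D)$ as well; the argument then splits on whether $N_G(D)$ is proper. This keeps everything elementary and makes the nilpotency hypothesis do the work exactly where it is needed. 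If you want a self-contained proof, that is the route to take; otherwise the honest move is to do what the paper does and simply cite Wielandt.
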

\begin{lem}{\rm \cite[Corollary 6.7]{Revin}}\label{R}
Suppose that $G$ is a group and $\pi$ is a set of primes.
Then $G\in D_{\pi}$ if and only if all composition factor of $G$ have property $D_{\pi}$.
\end{lem}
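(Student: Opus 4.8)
The plan is to derive the equivalence from two one-step closure properties of $D_\pi$ and then induct on composition length. Throughout I use the standard content of $D_\pi$: a group lies in $D_\pi$ exactly when it has a Hall $\pi$-subgroup, all its Hall $\pi$-subgroups are conjugate, and every $\pi$-subgroup is contained in one of them. Fix a composition series $1=G_0\lhd G_1\lhd\cdots\lhd G_n=G$. For the \emph{only if} direction I would peel off terms from the top using that $D_\pi$ passes to normal subgroups and to quotients: descending the series gives $G_i\in D_\pi$ for every $i$, and then $G_i/G_{i-1}\in D_\pi$, so every composition factor lies in $D_\pi$. For the \emph{if} direction, assuming all composition factors are in $D_\pi$, the subgroup $G_{n-1}$ is in $D_\pi$ by induction (its composition factors are among those of $G$), while $G/G_{n-1}$ is itself a composition factor; closure of $D_\pi$ under extensions then yields $G\in D_\pi$. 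Thus everything reduces to two claims: (A) $D_\pi$ is inherited by normal subgroups and by quotients; and (B) if $N\unlhd G$ with $N\in D_\pi$ and $G/N\in D_\pi$, then $G\in D_\pi$.

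Step (A) I would settle by elementary Hall-intersection and Frattini arguments in the spirit of Wielandt. For a Hall $\pi$-subgroup $H$ of $G$ and $N\unlhd G$, the intersection $H\cap N$ is a Hall $\pi$-subgroup of $N$ (since $[N:H\cap N]=[HN:H]$ divides the $\pi'$-number $[G:H]$) and $HN/N$ is a Hall $\pi$-subgroup of $G/N$; dominance in $N$ and in $G/N$ then follows by restricting, respectively lifting, $\pi$-subgroups and applying dominance in $G$. Conjugacy in $G/N$ descends from that in $G$ once one checks that every Hall $\pi$-subgroup of $G/N$ is the image of one of $G$. The only delicate point is conjugacy inside $N$: any Hall $\pi$-subgroup $Q$ of $N$ satisfies $Q=H_0\cap N$ for a Hall $\pi$-subgroup $H_0\ge Q$ of $G$ (by dominance in $G$), so the Hall $\pi$-subgroups of $N$ form a single $G$-class, and a Frattini argument $G=N_G(Q)N$ upgrades this $G$-conjugacy to $N$-conjugacy. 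None of this needs the classification.

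Step (B) is the substantial one, and I expect it to be the main obstacle. Even existence is nontrivial: lifting a Hall $\pi$-subgroup of $G/N$ to its preimage $K$, one must produce a $\pi$-subgroup of $K$ that covers $K/N$ and meets $N$ in a Hall $\pi$-subgroup, and $E_\pi$ is notoriously not closed under extensions for general $N$. I would first reduce to the case that $N$ is a minimal normal subgroup, so $N=S_1\times\cdots\times S_k$ with the $S_i$ isomorphic simple. If the $S_i$ are abelian, $N$ is an elementary abelian $\ell$-group and this layer of the extension is $\pi$-separable, so existence, conjugacy, and dominance all follow from the Schur--Zassenhaus theorem together with Hall's and \v{C}unihin's theorems for $\pi$-separable groups. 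The hard case is $S_i$ non-abelian simple with $S_i\in D_\pi$: here one must understand how $G$ permutes the factors $S_i$ and acts on them by automorphisms, control the normalizer in $G$ of a Hall $\pi$-subgroup $Q=Q_1\times\cdots\times Q_k$ of $N$, and show that this normalizer covers $G/N$, which is what secures both conjugacy and dominance for $G$. This step rests on precise structural information about the Hall $\pi$-subgroups of the finite simple groups and their stability under outer automorphisms, and hence on the classification of finite simple groups; it is the technical core of the Revin--Vdovin theory and the reason the criterion lies far beyond the elementary arguments of step (A).
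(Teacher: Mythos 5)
This lemma is not proved in the paper at all: it is quoted from Revin \cite[Corollary 6.7]{Revin}, and the citation is the correct ``proof'' here, because the statement is a deep theorem of the Revin--Vdovin theory whose known proofs rest on the classification of finite simple groups. Your outline identifies the right skeleton --- reduce to closure of $D_{\pi}$ under normal subgroups, quotients and extensions, then induct along a composition series --- but it is not a proof, and you concede as much at the decisive point: closure under extensions over a non-abelian minimal normal subgroup $N=S_1\times\dots\times S_k$ is precisely the content of the theorem, and ``control the normalizer of $Q=Q_1\times\dots\times Q_k$ and show it covers $G/N$'' is a restatement of the problem rather than an argument. Nothing in the proposal closes that gap, so Step (B) remains entirely open.

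There is also an unacknowledged gap in the part you call elementary. For $N\unlhd G$ with $G\in D_{\pi}$, existence and dominance in $N$ do follow as you say, and every Hall $\pi$-subgroup of $N$ has the form $H\cap N$ for a Hall $\pi$-subgroup $H$ of $G$, so the Hall $\pi$-subgroups of $N$ form a single $G$-class. But the Frattini argument you invoke to upgrade $G$-conjugacy to $N$-conjugacy needs $G=N_G(Q)N$, and this is not automatic: what one gets cheaply is only that $N_G(Q)N\supseteq HN$ has $\pi'$-index in $G$, since $H$ normalizes $Q=H\cap N$. The identity $G=N_G(Q)N$ is equivalent to pronormality of $Q$ in $G$, which is itself one of the nontrivial, classification-dependent ingredients of this circle of results (related failures are already visible for the weaker properties $E_{\pi}$ and $C_{\pi}$, e.g.\ the fusion of the two classes of Hall $\{2,3\}$-subgroups of $PSL(2,7)$ under an outer automorphism). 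A similar issue arises for dominance in quotients, where you would need a Hall $\pi$-subgroup of the preimage $W$ of a $\pi$-subgroup of $G/N$ covering $W/N$, i.e.\ an $E_{\pi}$-statement for an extension. So neither direction is as elementary as claimed; the lemma should be used as a black box with the reference given.
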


\begin{lem}{\rm \cite[Theorem 3.3.2]{Gore}}\label{Gore332Mashke}
Let $G$ be a $p'$-group of automorphisms of an abelian $p$-group $V$ and suppose
$V_1$ is a $G$-invariant direct factor of $V$. Then $V=V_1\times V_2$, where $V_2$ is also
$G$-invariant.
\end{lem}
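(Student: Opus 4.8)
The statement is the classical Maschke-type splitting theorem for coprime action, so the natural plan is to produce a $G$-equivariant projection of $V$ onto $V_1$ by averaging an arbitrary projection over the group. I would write $V$ additively and regard it as a module on which the $p'$-group $G$ acts by automorphisms. Since $V_1$ is a direct factor of $V$, there is a subgroup $W$ with $V=V_1\oplus W$; let $\pi\colon V\to V$ be the projection onto $V_1$ along $W$, an endomorphism of the abelian group $V$ with image $V_1$ and $\pi|_{V_1}=\mathrm{id}$. In general $\pi$ need not commute with the $G$-action, so I would not use $W$ itself as the complement.

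The key idea is to replace $\pi$ by its average $\bar\pi(v)=\frac{1}{|G|}\sum_{g\in G} g\,\pi(g^{-1}v)$. This makes sense precisely because $\gcd(|G|,p)=1$ and $V$ is a finite abelian $p$-group, so multiplication by $|G|$ is an automorphism of $V$, whose inverse I denote by $\frac{1}{|G|}(\,\cdot\,)$. First I would check that $\bar\pi$ is an endomorphism of the abelian group $V$ (everything in sight is additive) with image in $V_1$: as $V_1$ is $G$-invariant, each summand $g\pi(g^{-1}v)$ lies in $gV_1=V_1$, and $V_1$ is preserved by multiplication by $|G|^{-1}$ (the map $v\mapsto|G|v$ is injective on the finite group $V$, hence bijective on the subgroup $V_1$). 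Next, for $v\in V_1$ one has $g^{-1}v\in V_1$, hence $\pi(g^{-1}v)=g^{-1}v$ and $g\pi(g^{-1}v)=v$; summing over $G$ and dividing by $|G|$ gives $\bar\pi|_{V_1}=\mathrm{id}$, so $\operatorname{im}\bar\pi=V_1$ and $\bar\pi$ is idempotent (for any $v$, $\bar\pi(v)\in V_1$, whence $\bar\pi(\bar\pi(v))=\bar\pi(v)$). Finally $\bar\pi$ is $G$-equivariant: substituting $g=hk$ in the defining sum for $\bar\pi(hv)$ and reindexing over $k\in G$ yields $\bar\pi(hv)=h\,\bar\pi(v)$ for every $h\in G$.

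With this in hand I would set $V_2=\ker\bar\pi$. Since $\bar\pi$ is an idempotent endomorphism of the abelian group $V$, the decomposition $v=\bar\pi(v)+(v-\bar\pi(v))$ gives $V=\operatorname{im}\bar\pi\oplus\ker\bar\pi=V_1\oplus V_2$; the intersection is trivial because any $v\in V_1\cap V_2$ satisfies $v=\bar\pi(v)=0$. Because $\bar\pi$ commutes with the action, $\ker\bar\pi$ is $G$-invariant: from $\bar\pi(v)=0$ we get $\bar\pi(gv)=g\,\bar\pi(v)=0$. This produces the desired $G$-invariant complement $V_2$ and completes the argument.

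The only genuinely delicate point is the averaging step, where the coprimality hypothesis is essential: it is exactly what lets me divide by $|G|$ inside the $p$-group $V$. The equivariance verification is a routine change of variable in the group sum, and idempotency follows formally once the image and the restriction to $V_1$ are identified, so I expect the invertibility-of-$|G|$ argument—together with keeping track that $V_1$ is stable under $|G|^{-1}$—to be the main thing to state carefully.
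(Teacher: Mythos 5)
Your averaging argument is correct and complete: the coprimality of $|G|$ and $|V|$ does make multiplication by $|G|$ an automorphism of the finite abelian $p$-group $V$, and the equivariant idempotent $\bar\pi$ you construct yields the $G$-invariant complement $V_2=\ker\bar\pi$ exactly as you describe. The paper itself gives no proof of this lemma (it is quoted from Gorenstein, Theorem 3.3.2), and your Maschke-type averaging is essentially the standard proof found in that source, so there is nothing to correct or compare further.
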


\begin{lem}{\rm \cite[Theorem 5.2.3]{Gore}}\label{Gore5hzOcomutantePstavtomor}
Let $A$ be a $\pi(G)'$-group of automorphisms of
an abelian group $G$. Then $G=C_G(A)\times[G,A]$.
\end{lem}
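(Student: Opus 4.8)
The plan is to build the splitting from the \emph{norm} endomorphism attached to the coprime action, using $\gcd(|A|,|G|)=1$ (which is what the hypothesis that $A$ is a $\pi(G)'$-group means) to invert $|A|$. Since $G$ is abelian and finite, the map $\nu\colon G\to G$, $\nu(g)=\prod_{a\in A}g^{a}$, is a well-defined endomorphism; this is exactly where abelianness is essential, as the ordered product would otherwise fail to be a homomorphism. Reindexing the product over $A$ yields two facts at once: for every $b\in A$, $\nu(g)^{b}=\prod_{a\in A}g^{ab}=\nu(g)$, so $\operatorname{im}\nu\le C_G(A)$; and $\nu(g^{b})=\prod_{a\in A}g^{ba}=\nu(g)$, so $\nu$ is constant on $A$-orbits. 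In particular $\nu(c)=c^{|A|}$ for $c\in C_G(A)$, whence $\nu\circ\nu$ equals $g\mapsto\nu(g)^{|A|}$.

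First I would turn $\nu$ into an idempotent. As $\gcd(|A|,|G|)=1$, the $|A|$-th power map is an automorphism of $G$, so there is an integer $m$ with $m|A|\equiv1$ modulo the exponent of $G$. Put $e(g)=\nu(g)^{m}$; combining $\nu\circ\nu=(\,\cdot\,)^{|A|}\circ\nu$ with $m|A|\equiv1$ gives $e\circ e=e$, so $G=\operatorname{im}e\times\ker e$. I would then identify the factors: $\operatorname{im}e\le\operatorname{im}\nu\le C_G(A)$, while $e(c)=c^{m|A|}=c$ for $c\in C_G(A)$, so $\operatorname{im}e=C_G(A)$; and $e(g)=1$ forces $\nu(g)^{m|A|}=\nu(g)=1$, so $\ker e=\ker\nu$.

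It remains to prove $\ker\nu=[G,A]$, where $[G,A]=\langle g^{-1}g^{a}:g\in G,\ a\in A\rangle$, and this is the crux. One inclusion is immediate: $\nu(g^{-1}g^{a})=\nu(g)^{-1}\nu(g^{a})=\nu(g)^{-1}\nu(g)=1$, so $[G,A]\le\ker\nu$. For the reverse inclusion, recall that for an idempotent $e$ the complementary map $g\mapsto g\,e(g)^{-1}$ has image $\ker e=\ker\nu$, and compute $g\,e(g)^{-1}=g\,\nu(g)^{-m}=g^{m|A|}\nu(g)^{-m}=\prod_{a\in A}\big(g(g^{a})^{-1}\big)^{m}$, using $g^{m|A|}=g$. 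Each factor $g(g^{a})^{-1}=(g^{-1}g^{a})^{-1}$ lies in $[G,A]$, so $\ker\nu\le[G,A]$. The two inclusions give $\ker\nu=[G,A]$ and hence $G=C_G(A)\times[G,A]$.

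The step I expect to be the main obstacle is precisely this identification $\ker\nu=[G,A]$: the easy inclusion is routine, but the reverse one has to be extracted from the explicit form $g\,e(g)^{-1}=\prod_{a}\big(g(g^{a})^{-1}\big)^{m}$ of the complementary idempotent, and care is needed in reading these expressions inside $\operatorname{End}(G)$, where power maps are central and commute with $\nu$. The hypothesis $\gcd(|A|,|G|)=1$ is indispensable: without it the exponent $m$ does not exist, $\nu$ need not act invertibly on $C_G(A)$, and the decomposition genuinely fails. An alternative but essentially equivalent route would first decompose $G$ into its $A$-invariant primary components and run a Maschke-type averaging on each; the single norm-idempotent argument above handles all primes uniformly, which is why I would write it up that way.
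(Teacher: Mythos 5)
Your proof is correct: the norm map $\nu(g)=\prod_{a\in A}g^{a}$, made idempotent via an integer $m$ with $m|A|\equiv 1\pmod{\exp(G)}$, yields exactly the splitting $G=C_G(A)\times[G,A]$, and your identification of the kernel with $[G,A]$ is carried out without gaps. The paper gives no proof of this lemma but simply cites \cite[Theorem 5.2.3]{Gore}, and your argument is essentially the standard one found there, so nothing further is needed.
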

%\begin{lem}\label{Camin2} Let $G$ be a $p$-group with an irreducible faithful module $V$. Then the extension $H=VG$
%will be a full index group if and only if there is a regular orbit.
%\end{lem}

%\begin{lem}\cite[Lemma 3.6]{VasBig}\label{l:hallhigman}
% Let $s$ and $p$ be distinct primes, $H$ be a semidirect
%product of a normal $p$-subgroup $T$ and a cyclic subgroup $\langle g\rangle$ of order $s$,
%and $[T, \langle g\rangle]\neq 1$. Suppose that $H$ acts faithfully on a vector space $V$ over a field of
%positive characteristic $t$ not equal to $p$. If the minimal polynomial of $g$ on $V$
%does not equal to $x^s-1$, then
%\begin{enumerate}

%\item $C_T(g)\neq 1$;

%\item $T$ is non-Abelian;
%\item $p=2$ and $s = 2^{2^{\delta}}+1$ is a Fermat prime.
%\end{enumerate}
%\end{lem}

\begin{lem}\label{hz}
Let $S\leq G \leq Aut(S)$, where $S$ is a non abelian simple group such that $GK(G)$ has two connected components, $|\pi_2(G)|=1$ and for some $5<q\in \pi_1(G)$ the Sylow $q$-subgroups is abelian. Then there exists $g\in G$ such that $\pi(|g|)\subseteq\pi_1(G)$ and $|g^G|_q>1$.
\end{lem}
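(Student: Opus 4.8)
The plan is to argue by contradiction: suppose every $g\in G$ with $\pi(|g|)\subseteq\pi_1(G)$ satisfies $|g^G|_q=1$, and derive a contradiction from a counting estimate. First I would record the structural facts that the argument rests on. Since $\pi_2(G)=\{r\}$ is an isolated vertex of $GK(G)$, no element of $G$ has order divisible by $r$ and simultaneously by another prime; hence every element of $G$ is either an $r$-element or an $r'$-element, and the $r'$-elements are exactly the $g$ with $\pi(|g|)\subseteq\pi_1(G)$. Moreover $C_G(S)=1$, so $Z(G)=1$ and no Sylow subgroup of $G$ is central. Finally, case (a) of Lemma~\ref{GK} is impossible (a Frobenius or $2$-Frobenius group cannot contain a nonabelian simple normal subgroup $S$ with $C_G(S)=1$), so we are in case (b); in particular $F(G)=1$, the quotient $G/S$ is a $\pi_1(G)$-group, and $\pi_2(G)=\pi_j(S)$ for some $j$, whence $r\mid|S|$. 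In the configuration at hand $q\mid|S|$ as well.

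Next I would set up the covering. Under the contradiction hypothesis every $r'$-element $g$ has $|C_G(g)|_q=|G|_q$, so $C_G(g)$ contains a full Sylow $q$-subgroup $Q'$ of $G$, i.e. $g\in C_G(Q')$. Thus the set of $r'$-elements is contained in $\bigcup_{Q'}C_G(Q')$, the union ranging over all Sylow $q$-subgroups of $G$. Writing $Q$ for a fixed Sylow $q$-subgroup and $t=|N_G(Q):C_G(Q)|$, the subgroups $C_G(Q')$ are all conjugate, so the number of $r'$-elements is at most $|G:N_G(Q)|\cdot|C_G(Q)|=|G|/t$. Consequently the number of nontrivial $r$-elements is at least $|G|-|G|/t=|G|(t-1)/t$.

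On the other hand every nontrivial $r$-element lies in some Sylow $r$-subgroup, so with $R$ a Sylow $r$-subgroup the number of nontrivial $r$-elements is at most $|G:N_G(R)|\,(|R|-1)<|G:N_G(R)|\,|R|=|G|/|N_G(R):R|$. Comparing the two bounds yields $|N_G(R):R|<t/(t-1)$. If $t\ge2$ this forces $|N_G(R):R|=1$, i.e. $N_G(R)=R$, so by Burnside's normal $p$-complement theorem $G$ has a normal $r$-complement $K$; then $S\cap K\trianglelefteq S$ together with $r\mid|S|$ gives $S\cap K=1$ and $S\hookrightarrow G/K$, an $r$-group, which is absurd. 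If instead $t=1$, then $N_G(Q)=C_G(Q)$ with $Q$ abelian, so $Q\le Z(N_G(Q))$ and Burnside again produces a normal $q$-complement, impossible in the same way since $q\mid|S|$. Either branch gives a contradiction, so the required $g$ exists.

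I expect the main obstacle to be the counting step itself, namely being sure that the conjugates of $C_G(Q)$ cannot exhaust too much of $G$: it is exactly this that pins $|N_G(R):R|$ down to $1$ and produces the forbidden normal complement. The abelian hypothesis on the Sylow $q$-subgroup enters in the degenerate case $t=1$, where it is what licenses Burnside's theorem for $q$ (one may alternatively invoke Lemma~\ref{navarro}, which uses $q>5$, to record first that $q$-elements contribute $|x^G|_q=1$); and the divisibility inputs $r\mid|S|$ and $q\mid|S|$, needed to turn a normal complement into a contradiction with the simplicity of $S$, are the points where the almost-simple structure must be used carefully.
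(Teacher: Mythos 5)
Your counting framework is internally consistent as far as it goes: the covering of the $r'$-elements by the centralizers $C_G(Q')$, the two cardinality bounds, the resulting inequality $|N_G(R):R|<t/(t-1)$, and the elimination of case (a) of Lemma~\ref{GK} all check out. But both terminal branches have genuine gaps. In the branch $t\ge 2$ you conclude $N_G(R)=R$ and invoke Burnside's normal complement theorem; Burnside requires $R\le Z(N_G(R))$, which for a self-normalizing $R$ amounts to $R$ being abelian, and nothing in your argument establishes that the Sylow $r$-subgroup is abelian (the abelian hypothesis of the lemma concerns $q$, not $r$). Ruling out self-normalizing Sylow subgroups for odd primes in (almost) simple groups is possible, but only via the Guralnick--Malle--Navarro theorem, which is itself a classification-based result --- at which point the elementary character of the argument is lost.

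The second gap is the assertion ``in the configuration at hand $q\mid|S|$,'' which both branches need in order to turn a normal complement into a contradiction with the simplicity of $S$; this is stated without proof and is false in general. Take $S=PSL(2,2^7)$ and $G=S.7$ generated by $S$ and a field automorphism of order $7$: then $GK(G)$ has the isolated vertex $127$, and $q=7>5$ lies in $\pi_1(G)$ with cyclic (hence abelian) Sylow $7$-subgroup, yet $7\nmid|S|$. A normal $7$-complement would simply contain $S$ and yields no contradiction, so your argument cannot close in such cases (the lemma itself still holds for this $G$, witnessed for instance by an element of order $43$, whose centralizer has order prime to $7$). The paper avoids all of this by a direct case division over the classification: for alternating $S$ it exhibits a $2$-element whose centralizer is a $\{2,3\}$-group, for sporadic $S$ it consults tables, and for $S$ of Lie type in characteristic $t$ it uses a regular unipotent element $h$ with $\pi(|C_S(h)|)=\{t\}$, adjusting the choice when $t=q$. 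To salvage your approach you would need to repair both normal-complement steps; otherwise an explicit-element argument of the paper's kind is required.
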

\begin{proof}
Suppose that $S$ is an alternating group. It is easy to show that $G$ contains a $2$-element $g$ such that $\pi(|C_G(g)|)\subseteq\{2,3\}$. Consequently, in this case the lemma is hold. Using \cite{GAP}, \cite{Wil}, and \cite{Kon}, we can infer that the lemma is true if $S$ is isomorphic one of the sporadic simple groups. Let $S$ be a group of Lie type over a field of characteristic $t$. It follows from \cite{Wil} and \cite{Kon} that $t\in\pi_1(G)$. Of \cite[Proposition 5.1.2]{Carter} shows that $S$ contains a regular unipotent element $h$, in particular, $\pi(|C_S(h)|=\{t\}$. If $t=q$ then $|C_G(x)|_t<|G|_t$ for every $q'$-element $x$. Since $|\pi_1(G)|>1$ we can take $g\in G$ with $\pi(|g|)\subseteq\pi_1(G)\setminus\{t\}$. Otherwise $g=h$.
\end{proof}

\section{Proof of the Theorem and the Corollary }

%Из того, что расстояние между вершинами $p$ и $q$ в графе $GK(L)$ больше $2$ следует, что для любого $\alpha\in N(G)$ выполнено $\alpha_p=|L|_p$ или $\alpha_q=|L|_q$.

%Введем следующие обозначения:
%$\pi_p\subseteq\pi(L)$-- множество вершин графа $GK(L)$ смежных с $p$, $\pi_q\subseteq\pi(L)$-- множество вершин графа $GK(L)$ смежных с $q$.
% $\alpha,\beta \in N(G)$ такие, что $|\alpha|_p=1, \alpha=|L|/a$, где $\pi(a)\subseteq\pi_p$, $|\beta|_p=1, \beta=|L|/b$, где $\pi(b)\subseteq\pi_q$. $x,y\in G$ и $|x^G|=\alpha, |y^G|=\beta$.
%Допустим, что Теорема 1 не верен и $|G|_p>|G||_p$. Тогда, в централизаторе любого элемента группы $G$ найдется элемент порядка $p$. Пусть
Assume that the theorem is false and that $|G|_p>|G||_p$. Then the centralizer of each element of $G$ contains an element of order $p$. Let $P\in Syl_p(G)$, $Q\in Syl_q(G)$.

\begin{lem}\label{qbolsheqchasti}
$|G|_q>|G||_q$.
\end{lem}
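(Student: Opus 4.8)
\emph{Plan.} First I would record the easy reductions. If $|G||_q=1$ then the conclusion is immediate, since $q\mid|G|$ forces $|G|_q\ge q>1=|G||_q$; so I may assume $|G||_q\ge q$, and likewise $|G||_p\ge p$ (otherwise no class size is divisible by $p$ and a Sylow $p$-subgroup would be central, against $Z(G)=1$). Arguing by contradiction, suppose $|G|_q=|G||_q$. Then some $x_0\in G$ has $q\nmid|C_G(x_0)|$, i.e. $|x_0^G|_q=|G|_q$; in particular $x_0$ is a $q'$-element whose centralizer contains no element of order $q$. Recall the running hypothesis of this section, $|G|_p>|G||_p$, which says $p\mid|C_G(x)|$ for every $x$. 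Applying it to any $q$-element $h$ produces a commuting element of order $p$, hence an element of order $pq$, so $p$ and $q$ are adjacent in $GK(G)$.

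The heart of the argument is to control the $q$-elements. Given a $q$-element $h$, choose by the running hypothesis an element $g$ of order $p$ with $g\in C_G(h)$, and set $w=gh$, of order $p|h|$, so that both $p$ and $q$ divide $|C_G(w)|$. Then $|w^G|_q<|G|_q=|G||_q$ and $|w^G|_p\le|G||_p<|G|_p$. Of the admissible values $\{|G||_p,|G||_q,|G||_{\{p,q\}}\}$ for $|w^G|_{\{p,q\}}$, both $|G||_q$ and $|G||_{\{p,q\}}$ have $q$-part equal to $|G||_q$, which would force $|w^G|_q=|G||_q$ and is impossible; hence $|w^G|_{\{p,q\}}=|G||_p$, forcing $|w^G|_q=1$ and $|w^G|_p=|G||_p$. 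Since $h$ lies in $\langle w\rangle$ we have $C_G(w)\le C_G(h)$, whence $|h^G|_q=1$ for \emph{every} $q$-element $h$. By Lemma~\ref{navarro} (this is where $q>5$ enters) the Sylow $q$-subgroup $Q$ is abelian; and as the admissible values all exceed $1$ while only $|G||_p$ is a power of $p$, the same computation gives $|h^G|_p=|G||_p$, so $p\mid|h^G|$ for all $q$-elements. By Lemma~\ref{pat} no Sylow $p$-subgroup then centralizes a Sylow $q$-subgroup. I would also note that the $p$-part $g$ of $w$ satisfies $C_G(w)\le C_G(g)$, so such a $g$ centralizes a full (abelian) Sylow $q$-subgroup.

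To reach a contradiction I would now exploit the witness $x_0$. As a $q'$-element with $q$-free centralizer, $x_0$ normalizes, by coprime (Glauberman) action, a Sylow $q$-subgroup $Q$ and acts fixed-point-freely on it, so $Q\rtimes\langle x_0\rangle$ is Frobenius and Lemma~\ref{frgroup} gives $|x_0|\mid|G|_q-1$. Combining the abelianity of $Q$, the fact that $p$ divides every $q$-element class size, and the presence of $p$-elements centralizing a full Sylow $q$-subgroup, I would argue that $GK(G)$ is disconnected and apply Lemma~\ref{GK}. The Frobenius and $2$-Frobenius configurations of case~(a) are ruled out using Lemma~\ref{frgroup} together with the restriction on class sizes; in the almost simple case~(b), Lemma~\ref{hz} supplies an element $g$ with $\pi(|g|)\subseteq\pi_1(G)$ and $|g^G|_q>1$. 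Pulling this back through the quotient by $F(G)$ via Lemma~\ref{factorKh}, one obtains a class size whose $q$-part is nontrivial yet whose $\{p,q\}$-part cannot lie in $\{|G||_p,|G||_q,|G||_{\{p,q\}}\}$ under the constraints already established — the desired contradiction.

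The main obstacle is this last step: showing that the data assembled in the second paragraph actually forces $GK(G)$ to be disconnected, and then carrying out the case analysis of Lemma~\ref{GK}. Eliminating the $2$-Frobenius case and, above all, controlling the almost simple case — matching the element produced by Lemma~\ref{hz} against the admissible $\{p,q\}$-parts — is where the real work lies; by contrast, the reductions and the computation forcing $Q$ to be abelian are comparatively routine.
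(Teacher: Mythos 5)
Your opening reductions and the computation in your second paragraph are sound: assuming $|G|_q=|G||_q$, the element $w=gh$ does force $|w^G|_{\{p,q\}}=|G||_p$, hence $|h^G|_q=1$ and $|h^G|_p=|G||_p$ for every $q$-element $h$. But the proof stops there in substance: the contradiction is never actually derived, and the route you sketch for it cannot work. Under the standing hypothesis $|G|_p>|G||_p$ the centralizer of \emph{every} element of $G$ contains an element of order $p$, so every prime of $\pi(G)$ is adjacent to $p$ in $GK(G)$ and the prime graph of $G$ is connected; Lemmas~\ref{GK} and~\ref{hz} are therefore unavailable for $G$ itself. The auxiliary step is also unjustified: a $q'$-element $x_0$ with $q\nmid|C_G(x_0)|$ need not normalize any Sylow $q$-subgroup of $G$ (Glauberman's lemma concerns coprime \emph{operator} groups acting externally, not elements of $G$), so the Frobenius group $Q\rtimes\langle x_0\rangle$ need not exist. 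Since you yourself flag the disconnectedness claim and the ensuing case analysis as ``where the real work lies,'' the proposal has a genuine gap precisely at the point where the statement has to be proved.

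The paper's own argument is a short direct one, with no contradiction hypothesis and no classification machinery. Take $1\neq a\in Z(Q)$; then $|a^G|_q=1$, so $|a^G|_p=|G||_p$ by the $\{p,q\}^*$ condition. The standing hypothesis gives a $p$-element $g\in C_G(a)$. If $|g^G|_p=1$, then $|g^G|_q=|G||_q$, and since the nontrivial $q$-element $a$ lies in $C_G(g)$ we get $|G|_q=|g^G|_q\,|C_G(g)|_q\geq q\,|G||_q>|G||_q$. If instead $|g^G|_p=|G||_p$, then $|(ag)^G|_p=|g^G|_p$, so $C_G(ag)=C_G(a)\cap C_G(g)$ contains a Sylow $p$-subgroup of $C_G(g)$; hence $C_G(a)$ contains $Z(\overline{P})$ for some $\overline{P}\in Syl_p(G)$, i.e.\ a nontrivial $p$-element $g'$ with $|g'^G|_p=1$, and the first case applies with $g'$ in place of $g$. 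A local argument of this kind, rather than global prime-graph tools, is what the statement requires.
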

\begin{proof}
If $a\in Z(Q)$ then $|a^G|_q=1$ and hence $|a^G|_p=|G||_p$. Let $g\in C_G(a)$ be the $p$-element. If $|g^G|_p=1$ then $|g^G|_q=|G||_q$ and, consequently, the lemma is proved. Assume that $|g^G|_p=|G||_p$. Then $|(ag)^G|_p=|g^G|_p$. From Lemma \ref{factorKh} we have $C_G(ag)=C_G(a)\cap C_G(g)$ and hence $C_G(a)$ contains some Sylow $p$-subgroup of $C_G(g)$. Therefore, $C_G(a)$ include $Z(\overline{P})$ for some $ \overline{P}\in Syl_p(G)$. Thus $C_G(a)$ contains a $p$-element $g'$ such that $ |g'^G| _p = 1 $ and the lemma is proved.
\end{proof}

Lemma \ref{qbolsheqchasti} shows that $|C_G(a)|_q>1$ for every $a\in G$. In particular, the centralizer of every element contains an element of order $q$.

%Из леммы \ref{qbolsheqchasti} следует, что $|C_G(a)|_q>1$ для любого $a\in G$. В частности, в централизаторе любого элемента найдется элемент порядка $q$.

\begin{lem}\label{pqTh}
$\pi(G)\neq\{p,q\}$.
\end{lem}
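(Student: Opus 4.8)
The plan is to argue by contradiction: assume $\pi(G)=\{p,q\}$. Since $G\in\{p,q\}^*$ and $|G|$ is a $\{p,q\}$-number, every $\alpha\in N(G)$ satisfies $\alpha=\alpha_{\{p,q\}}\in\{|G||_p,|G||_q,|G||_{\{p,q\}}\}$, so writing $p^a=|G||_p$ and $q^b=|G||_q$ the nontrivial class sizes lie in $\{p^a,q^b,p^aq^b\}$ --- exactly Camina's configuration. The first step is to observe that both $p^a$ and $q^b$ actually occur: for a nontrivial $x\in Z(P)$ one has $P\le C_G(x)$, so $|x^G|$ is a nontrivial power of $q$ (using $Z(G)=1$), forcing $|x^G|=q^b$ and $b\ge1$; symmetrically a nontrivial element of $Z(Q)$ gives a class of size $p^a$ and $a\ge1$. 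Then I would split on whether the mixed value $p^aq^b$ occurs. If it does, all three values of Camina's theorem are present, so Lemma~\ref{camin2} yields that $G$ is nilpotent; but a nilpotent group has nontrivial center, contradicting $Z(G)=1$.

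The remaining, and main, case is $N(G)=\{p^a,q^b\}$ with no mixed class. Here $G$ is a $\{p,q\}$-group, hence solvable by Burnside, so its Fitting subgroup $F(G)=O_p(G)\times O_q(G)$ is nontrivial. The key observation is that $Q$ centralizes $O_p(G)$: every $z\in O_p(G)$ has $z^G\subseteq O_p(G)$, so $|z^G|$ is a power of $p$; were some $z$ moved by $Q$, then $q$ would divide $[Q:C_Q(z)]=[Q:Q\cap C_G(z)]$, which divides $|z^G|$, a contradiction. Symmetrically $P$ centralizes $O_q(G)$.

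Now if both $O_p(G)$ and $O_q(G)$ are nontrivial, choose nontrivial $z\in Z(O_p(G))$ and $w\in Z(O_q(G))$; then $z$ is a $p$-element with $|z^G|=p^a$ (so $|C_G(z)|_p=|P|/p^a$) and $w$ a $q$-element with $|w^G|=q^b$ (so $|C_G(w)|_q=|Q|/q^b$), while $z$ and $w$ commute and have coprime order. By Lemma~\ref{factorKh}(iii), $C_G(zw)=C_G(z)\cap C_G(w)$, whence $|(zw)^G|_p=p^a$ and $|(zw)^G|_q=q^b$ (each prime part is already the maximal one available), so $|(zw)^G|=p^aq^b$, contradicting the absence of a mixed class. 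If instead one of them, say $O_q(G)$, is trivial, then $F(G)=O_p(G)$ and solvability gives $C_G(F(G))\le F(G)=O_p(G)$; but $Q\le C_G(O_p(G))$ then places a nontrivial $q$-group inside a $p$-group, again a contradiction.

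I expect the main obstacle to be precisely this last case: once the mixed class size is missing, Camina's theorem is unavailable and such groups (e.g.\ the small Frobenius groups realizing $\{1,p,q\}$) are genuinely non-nilpotent, so the contradiction cannot come from nilpotency. The crux is therefore the orbit argument pinning $Q$ inside $C_G(O_p(G))$, combined with either the solvable-group relation $C_G(F(G))\le F(G)$ or the direct construction of a mixed class from $Z(O_p(G))$ and $Z(O_q(G))$; verifying that $zw$ attains class size exactly $p^aq^b$ rests on $p^a$ and $q^b$ being the maximal prime-power parts of class sizes, which is exactly where the hypothesis $G\in\{p,q\}^*$ enters.
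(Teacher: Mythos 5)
Your opening steps are fine and agree with the paper: nontrivial elements of $Z(P)$ and $Z(Q)$ show that both $p^a=|G||_p$ and $q^b=|G||_q$ occur with $a,b\ge 1$, and if the mixed value also occurs then Lemma~\ref{camin2} makes $G$ nilpotent, contradicting $Z(G)=1$. The problem is the remaining case $N(G)=\{p^a,q^b\}$, where your ``key observation'' fails. From $z^G\subseteq O_p(G)$ you cannot conclude that $|z^G|$ is a power of $p$: a conjugacy class contained in a set of $p$-power cardinality need not have $p$-power cardinality (in $A_4$ the class of $(12)(34)$ has size $3$ and lies inside $O_2(A_4)$). Worse, in the situation at hand the claim is provably wrong whenever $O_p(G)\neq 1$: the subgroup $Z(P)\cap O_p(G)$ is then nontrivial and contained in $Z(O_p(G))$, and any nontrivial $z$ in it satisfies $P\le C_G(z)$, hence $|z^G|=q^b$ by exactly the argument you applied to $Z(P)$. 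So your chosen $z\in Z(O_p(G))$ may well have class size $q^b$ rather than $p^a$, and $zw$ then yields no mixed class. The supporting divisibility claim is also backwards: $[Q:Q\cap C_G(z)]$ is a \emph{multiple} of $|z^G|_q$, not a divisor of $|z^G|$ (in $S_3$ with $z=(12)$ and $Q=\langle(13)\rangle$ the index is $2$, which does not divide $|z^G|=3$). Since both of your sub-cases rest on ``$Q$ centralizes $O_p(G)$'', the argument does not close.

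The paper avoids $O_p(G)$ altogether. It takes $g\in Z(P)$, so $|g^G|=q^b$, and invokes Lemma~\ref{qbolsheqchasti} (available under the standing assumption $|G|_p>|G||_p$) to produce a $q$-element $h\in C_G(g)$; since $C_G(gh)=C_G(g)\cap C_G(h)$, the value $|h^G|=p^a$ would already create a class of size divisible by $p^aq^b$, so $|h^G|=q^b$ and $C_G(g)=C_G(h)$. A nontrivial $h'$ in the center of a Sylow $q$-subgroup containing $h$ then lies in $C_G(h)=C_G(g)$ and has $|h'^G|=p^a$, so $gh'$ has class size divisible by $p^aq^b$; contradiction. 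Your instinct to manufacture commuting $p$- and $q$-elements with class sizes $p^a$ and $q^b$ is exactly right, but they must be located this way, not inside the Fitting subgroup.
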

\begin{proof}
Assume that $G$ is a $\{p, q\}$-group. We have $N(G)\subseteq \{|G||_p, |G||_q, |G||\} $. Suppose that $N(G)=\{|G||_p\}$ or $N(G)=\{|G||_p, |G||\}$. Take $g\in Z(P)$ where $P\in Syl_p$. Then $|g^G|_p=1$. Consequently, $|g^G|=1$ and $g\in Z(G)$; a contradiction. Assume that $N(G)=\{|G||_p, |G||_q\}$. Take $g\in Z(P)$, where $P\in Syl_p$. Lemma \ref{qbolsheqchasti} shows that  $C_G(g)$ contains a $q$-element $h$. We have $C_G(hg)=C_G(h)\cap C_G(g)$; in particular, $|(gh)^G|$ is a multiple of both $|g^G|$ and $|h^G|$. Therefore $C_G(h)=C_G(g)$. Now $C_G(h)$ contains an element $h'$ such that $|h'^G|_q=1$. Thus, $|(gh')^G|_{\{p,q\}}=|G||_p|G||q$; a contradiction. We have $N(G)=\{|G||_p, |G||_q, |G||\} $. It follows from the Lemma \ref{camin2} that $G$ is nilpotent. Hence, $Z(G)>1$; a contradiction.

%Допустим, что $G$ --- $\{p,q\}$-группа. Имеем $N(G)\subseteq\{|G||_p,|G||_q,|G||\}$. Из леммы \ref{camin2} следует, что $G$ нильпотентна. Следовательно $Z(G)>1$; противоречие.

% Следовательно, для любого $p$-элемента $x$ имеем $|x^G|=|G||_q$. Пусть $y\in C_G(x)$ --- $q$-элемент. Тогда $|y^G|=|G||_p$ и следовательно $|(xy)^G|=|G||$; противоречие.

%Пусть $F=F(G)$--- подгруппа фитинга группы $G$. Допустим, что $F$ --- $p$-группа. Поскольку $C_G(F)\leq F$ и силосовская $p$-подгруппа группы $G$ абелева, то $F$ --- силосовска $p$-группа группы $G$, и следовательно $G=F\leftthreetimes K$, где $K$ --- $q$-группа. Поскольку $Z(G)=1$, то $K$ действует точно на $F$. Из леммы \ref{Camin2} следует, что $F$ не регулярный модуль над $K$. Но любой модуль над абелевой группой регулярен; противоречие. Таким образом $F$--- $\{p,q\}$-группа. Легко показать, что $G$ представима в виде $(A\times B).(C\times D)$ где $A$ и $C$--- не тривиальные $p$-группы, а $B$ и $D$ не тривиальные $q$-группы. Допустим, что найдется элемент $g\in G\setminus A$ $p$-элемент такой, что найдется $n<|g|$ и $g^n\in A$. Очевидно, что $C_G(g)\leq C_G(g^n)$. Элемент $g$ действует не тривиально на $B<C_G(g^n)$, следовательно $C_G(g)<C_G(g^n)$ и $|g^G|>|(g^n)^G|$; противоречие. Таким образом $A.C\simeq A\times C$ и следовательно $C_G(ab)=C_G(a)\cap C_G(b)$ где $a\in A$, $b\in B$. И следовательно $C_G(ab)<C_G(a)$; противоречие.
\end{proof}

\begin{lem}\label{p}
For every $g\in P $ we have $|g^G|_{q}=|G||_q$.
\end{lem}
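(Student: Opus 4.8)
The plan is to use the $\{p,q\}^*$-property to collapse the possibilities for $|g^G|_q$ and then exclude the bad one. Since $g$ is a $p$-element, $|g^G|_q$ is a power of $q$, while $|g^G|_{\{p,q\}}\in\{|G||_p,|G||_q,|G||_{\{p,q\}}\}$; comparing $q$-parts (and using $|G||_q>1$) forces $|g^G|_q\in\{1,|G||_q\}$, and if $|g^G|_q=1$ then necessarily $|g^G|_p=|G||_p$. So it suffices to exclude $|g^G|_q=1$. As a base case I would first dispose of $g\in Z(P)$: here $P\le C_G(g)$, so $|g^G|_p=1$, and then the property forces $|g^G|_{\{p,q\}}$ to be the unique pure $q$-power among the three admissible values, namely $|G||_q$. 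Thus the content of the lemma lies entirely in the noncentral elements of $P$.

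Next I would argue by contradiction: suppose some nontrivial $g\in P$ has $|g^G|_q=1$. Then $C_G(g)$ contains a full Sylow $q$-subgroup $Q_0$ of $G$, while $|g^G|_p=|G||_p$ gives $|C_G(g)|_p=|G|_p/|G||_p<|G|_p$. Picking $a\in Z(Q_0)\setminus\{1\}$ we get $Q_0\le C_G(a)$, hence $|a^G|_q=1$ and $|a^G|_p=|G||_p$ by the property, and moreover $g,Q_0\le C_G(a)$. The engine is Lemma~\ref{factorKh}(iii): for a $q$-element $h\in Q_0$, which commutes with $g$, one has $C_G(gh)=C_G(g)\cap C_G(h)$, so both $|g^G|$ and $|h^G|$ divide $|(gh)^G|$. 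Feeding in such $h$ I would track the pair $(|(gh)^G|_p,|(gh)^G|_q)$ against the three allowed types: the $p$-coordinate is forced to equal $|G||_p$ (it is a multiple of $|g^G|_p=|G||_p$ and never exceeds $|G||_p$), whereas, since $Q_0\le C_G(g)$ yields $C_{Q_0}(h)\le C_G(gh)$, the $q$-coordinate satisfies $|h^G|_q\le|(gh)^G|_q\le|Q_0:C_{Q_0}(h)|=|h^{Q_0}|$. The target contradiction is a conjugacy class whose $\{p,q\}$-part is none of $|G||_p$, $|G||_q$, $|G||_{\{p,q\}}$ — ideally an intermediate $q$-part $1<|(gh)^G|_q<|G||_q$ produced by a suitable noncentral $h\in Q_0$.

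The main obstacle is exactly that this pinching does not close on its own. First, two elements of $P$ need not have coprime orders, so Lemma~\ref{factorKh}(iii) cannot transport the $Z(P)$-computation directly to an arbitrary $g\in P$. Second, the seductive alternative of manufacturing a forbidden element of type $(1,1)$ — one centralizing a full Sylow $p$- and a full Sylow $q$-subgroup simultaneously — cannot be carried out inside $C_G(Q_0)$, because $|C_G(Q_0)|_p\le|C_G(a)|_p=|G|_p/|G||_p<|G|_p$, so $C_G(Q_0)$ misses a full Sylow $p$-subgroup. Hence one is pushed to force the intermediate $q$-part above, and for that the internal structure of $Q_0$ must be controlled; this is where $q>5$ is essential. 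Using Lemma~\ref{navarro} I would decide whether $Q_0$ is abelian, i.e.\ whether every $q$-element has $|x^G|_q=1$: in the nonabelian case some $h\in Q_0$ with $1<|h^{Q_0}|$ and $|h^G|_q\ne|G||_q$ is the companion needed to land $|(gh)^G|_q$ strictly between $1$ and $|G||_q$, while in the abelian case I would instead try to show that $g$ centralizes a representative of every conjugacy class and invoke Lemma~\ref{hz2} to conclude $g\in Z(G)$, contradicting the triviality of the center. I expect the two-coordinate bookkeeping against the three admissible values, fused with this structural input on $Q_0$, to be the crux.
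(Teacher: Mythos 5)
Your reduction is correct: by the $\{p,q\}^*$ condition the lemma can only fail if some nontrivial $g\in P$ has $|g^G|_q=1$ (and then $|g^G|_p=|G||_p$), and your treatment of the base case $g\in Z(P)$ is sound. But your main engine does not, and cannot, close. For $h\in Q_0\le C_G(g)$ the sandwich $|h^G|_q\le|(gh)^G|_q\le|h^{Q_0}|$ is right, yet the $\{p,q\}^*$ property itself already forces $|(gh)^G|_q\in\{1,|G||_q\}$ (because $|(gh)^G|_p=|G||_p$), so no choice of $h$ can land the $q$-part strictly in between: if $|h^G|_q>1$ the outcome $|(gh)^G|_q=|G||_q$ merely tells you $|h^{Q_0}|\ge|G||_q$, which is not absurd since $|Q_0|=|G|_q>|G||_q$; and if $|h^G|_q=1$ then $|(gh)^G|_q=1$ is perfectly consistent (e.g.\ for $h\in Z(Q_0)$). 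You flag this obstruction yourself, but the repair you propose (Lemma \ref{navarro} applied to $Q_0$ to split into abelian and nonabelian cases) is not developed into an argument and does not remove it.

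The paper's proof is the part your sketch only gestures at in its final sentence, and it is where all the work lies. Assuming such a $g$ exists (so $C_G(g)$ contains a full Sylow $q$-subgroup), one proves that the centralizer of \emph{every} element of $G$ contains the center of some Sylow $p$-subgroup; Lemma \ref{hz2} then forces $Z(P)\le Z(G)=1$, a contradiction. This is done by taking a putative $x$ whose centralizer misses every $Z(\overline P)$, decomposing $x$ into commuting parts ($p$-part, $q$-part, $\{p,q\}'$-part), and in each case repeatedly applying $C_G(uv)=C_G(u)\cap C_G(v)$ together with the two-valued constraint on $p$- and $q$-parts of class sizes to push a conjugate of $Z(\overline P)$ into $C_G(x)$; the hypothesis on $g$ serves as the transfer device between Sylow $q$- and $p$-data. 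None of this case analysis appears in your proposal, so as written the proof has a genuine gap.
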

\begin{proof}
Assume that there exists $g\in P$ such that $|g^G|_q=1$. Let us show that in the centralizer of each element there is a subgroup conjugate to $Z(P)$. Take $x\in G$ such that $C_G(x)$ does not include the center of any Sylow $p$-subgroup of $G$.

Assume that $x=ab=ba$, where $a$ is a $p'$-element and $b$ is a non trivial $p$-element. Suppose that $Z(P)<C_G(a)$. If $|a^G|_p=1$ then $b^{C_G (a)}$ contain an element $c$ with $Z(P)<C_G(ac)$. Hence, $|a^G|_p=|G||_p $ and $|a^G|_p=|(ab)^G|_p $. Therefore, $x$ is contained in some Sylow $p$-subgroup of $C_G(a)$, and hence $C_G(x)$ includes the center of a Sylow $p$-subgroup. Thus, we may assume that $x$ is a $p'$-element.

Suppose that $x$ is a $\{p,q\}'$-element. If $|x^G|_p= 1 $ then clearly there exists $\overline{P}\in Syl_p(G)$ with $Z(\overline{P})<C_G(x) $; a contradiction. Hence, $|x^G|_p=|G||_p$. Take $h\in C_G(x)$ a $p$-element. If $|h^G|_p=|G||_p $ then $|(xh)^G|_p=|h^G|_p $. Hence, $C_G(x)$ includes a $p$-subgroup $S$ of $C_G(h)$. The group $S$ contains the center of some Sylow $p$-subgroup of $G$. Therefore, $|h^G|_p=1$ for every $p$-element $h$ of $C_G(x)$. As we noted above, the centralizer of each element contains an element of order $q$. Take $y\in C_G (x)$ a $q$-element. Assume that $|y^G|_p=|G||_p$. Then $|(xy)^G|_p=|y^G|_p$. Hence, $C_G(x)$ include some Sylow $p$-subgroup of $C_G(y)$. Since $C_G(g)$ includes a Sylow $q$-subgroup of $G$, we may assume that $g \in C_G(y) $. Hence in $C_G(x)$ there is a $p$-element $g'$ with $|g'^G|_p = |G||_p $; a contradiction. Thus, we may assume that $|y^G|_p=1$ for every $q$-element $y$ of $C_G(x)$. We have $|(xy)^G|_q=|y^G|_q=|G||_q$. Hence, $C_G(x)$ include some Sylow $q$-subgroup of $C_G(y)$. Every $q$-subgroup of $C_G(y)$ contains an element $z$ of the center of some Sylow $q$-subgroup of $G$. Therefore, $C_G(x)$ contains $z'$ such that $|z'^G|_q=1$; a contradiction.

Therefore, $x=ab=ba$ where $a$ is a $\{p,q\}'$-element and $b$ is a $q$-element. If $|a^G|_p=|G||_p$ or $|b^G|_p = |G||_p $ then the preceding arguments imply claim. Thus, $|a^G|_p=|b^G|_p=1$. Hence $|a^G|_q=|G||_q $. Since $C_G(a)$ include some Sylow $p$-subgroup of $G$, we may assume that $g\in C_G(a)$. Since $|(ag)^G|_q=|a^G|_q$, it follows that $C_G(ag)$ include some Sylow $q$-subgroup of $C_G(a)$. Hence $C_G(a)$ contains $h$ with $x\in C_G(g^h)$. We have $|(xg^h)^G|_p=|g^G|_p$ and hence $Z(P)<C_G(x^{h^{- 1}})$; a contradiction.

%Таким образом $x=ab=ba$ где $a$ $\{p,q\}'$-элемент, $b$ $q$-элемент. Если $|a^G|_p=|G||_p$ или $|b^G|_p=|G||_p$, то утверждение леммы следует из предыдущих рассуждений. Таким образом $|a^G|_p=|b^G|_p=1$. Следовательно $|a^G|_q=|G||_q$. Можно считать, что $g\in C_G(a)$. Поскольку $|(ag)^G|_q=|g^G|_q$, то $C_G(g)$ содержет некоторую силосовскую $q$-подгруппу группы $C_G(a)$. Следовательно в $C_G(a)$ существует $h$ такой, что $x\in C_G(g^h)$. Имеем $|(xg^h)^G|_p=|g^G|_p$ следовательно $Z(P)<C_G(x^{h^{-1}})$. Лемма доказана.
\end{proof}

\begin{lem}\label{q}
For every $g\in Q$ we have $|g^G|_{p}=|G||_p$.
\end{lem}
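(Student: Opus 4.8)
The plan is to deduce Lemma~\ref{q} from the argument already used for Lemma~\ref{p} by interchanging the roles of $p$ and $q$. This interchange is justified because, after Lemma~\ref{qbolsheqchasti}, the two primes sit in completely symmetric positions: the opening assumption gives $|G|_p>|G||_p$ together with the fact that every centralizer contains an element of order $p$, while Lemma~\ref{qbolsheqchasti} supplies the dual statements $|G|_q>|G||_q$ and that every centralizer contains an element of order $q$. Furthermore the $\{p,q\}^*$-hypothesis forces, for every nontrivial class, $|x^G|_p\in\{1,|G||_p\}$ and $|x^G|_q\in\{1,|G||_q\}$ with the two parts never simultaneously equal to $1$, and this constraint is itself symmetric in $p$ and $q$. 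Since $p>q>5$ enters only in fixing this symmetric setup and plays no role in the subsequent combinatorics, the proof of Lemma~\ref{p} transfers verbatim.

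Concretely, I would argue by contradiction and suppose there is $g\in Q$ with $|g^G|_p=1$; then $|g^G|_q=|G||_q$ and $C_G(g)$ contains a full Sylow $p$-subgroup of $G$. Mirroring Lemma~\ref{p}, the goal becomes to show that the centralizer of every element of $G$ contains a conjugate of $Z(Q)$. Fixing a hypothetical $x$ whose centralizer avoids $Z(\overline{Q})$ for every $\overline{Q}\in Syl_q(G)$, I would run the same three-stage reduction with $p$ and $q$ swapped: first write $x=ab=ba$ with $b$ the $q$-part and use Lemma~\ref{factorKh}(iii) together with the dichotomy on class $q$-parts to absorb $b$, reducing to $x$ a $q'$-element; second, handle the case that $x$ is a $\{p,q\}'$-element by playing the $p$-elements and $q$-elements of $C_G(x)$ against one another and conjugating the special element $g$ into $C_G(x)$ using that $C_G(g)$ carries a full Sylow $p$-subgroup; third, treat $x=ab=ba$ with $a$ a $\{p,q\}'$-element and $b$ a $p$-element. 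In each stage one produces a conjugate of $x$ whose centralizer contains some $Z(\overline{Q})$, contradicting the choice of $x$.

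Having shown that every $C_G(x)$ contains a conjugate of $Z(Q)$, I would fix any nontrivial $z\in Z(Q)$; then every conjugacy class of $G$ contains a representative centralizing $z$, so Lemma~\ref{hz2} gives $z\in Z(G)$, contradicting $Z(G)=1$. Hence no $g\in Q$ with $|g^G|_p=1$ can exist, and therefore $|g^G|_p=|G||_p$ for every $g\in Q$.

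The main obstacle is not conceptual but lies in \emph{verification}: one must confirm that not a single implication in the proof of Lemma~\ref{p} covertly uses that $p$ is the larger prime, or that $p$ came from the hypothesis while $q$ came from Lemma~\ref{qbolsheqchasti}. The two steps that deserve the closest scrutiny are the absorption of the $q$-part of $x$ in the first stage and the step that conjugates $g$ into $C_G(y)$; both rest on the existence of a special element whose centralizer contains a full Sylow subgroup and on order-$p$ and order-$q$ elements being available in every centralizer, and since all of these inputs now hold for $p$ and $q$ alike, the symmetric argument goes through.
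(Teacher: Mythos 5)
Your proposal is correct and coincides with the paper's own treatment: the paper proves Lemma~\ref{q} simply by the remark ``Similar to the proof of Lemma~\ref{p}'', i.e.\ by the same $p\leftrightarrow q$ interchange you carry out. Your additional check that the setup is genuinely symmetric after Lemma~\ref{qbolsheqchasti} (both primes exceed $5$, every centralizer contains elements of order $p$ and of order $q$, and the $\{p,q\}^*$ dichotomy is symmetric) is precisely the verification that remark tacitly relies on.
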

\begin{proof}
Similar to the proof of Lemma \ref{p}.
\end{proof}

\begin{lem}\label{pcentralizer}
We have $C_G(Q)=Z(Q)$ and $C_G(P)=Z(P)$.
\end{lem}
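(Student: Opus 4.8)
The plan is to prove both equalities by the same argument, since the statement is symmetric in $p$ and $q$; so I focus on $C_G(Q)=Z(Q)$ and transfer the reasoning (with Lemma~\ref{q} in place of Lemma~\ref{p}) to $C_G(P)=Z(P)$. As $Z(Q)\le C_G(Q)$ always and $C_G(Q)\cap Q=Z(Q)$, it suffices to show that $C_G(Q)$ is a $q$-group, i.e. that it contains no nontrivial element of order coprime to $q$. I first dispose of the two easy primes. If $C_G(Q)$ contained a nontrivial $p$-element $g$, then $Q\le C_G(g)$ would give $|g^G|_q=1$, contradicting Lemma~\ref{p} (which yields $|g^G|_q=|G||_q>1$ for every $p$-element); hence $C_G(Q)$ has no nontrivial $p$-element. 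Any $q$-element of $C_G(Q)$ generates together with $Q$ a $q$-group containing the Sylow subgroup $Q$, so it lies in $Z(Q)$. Thus everything reduces to excluding from $C_G(Q)$ every element of prime order $r\notin\{p,q\}$ (I treat the degenerate cases $|G||_p=1$ or $|G||_q=1$ separately, since by Lemma~\ref{navarro} they force an abelian Sylow subgroup).

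Suppose, for contradiction, that some $x\in C_G(Q)$ has prime order $r\notin\{p,q\}$. Then $Q\le C_G(x)$, so $|x^G|_q=1$; since $x$ is noncentral (the center being trivial), the condition $G\in\{p,q\}^*$ together with $|G||_q>1$ forces $|x^G|_{\{p,q\}}=|G||_p$, that is $|x^G|_p=|G||_p$ and $|x^G|_q=1$. The structural fact I would exploit is that, for \emph{every} noncentral $u\in G$, the $\{p,q\}^*$-condition pins the $p$-part of $|u^G|$ to lie in $\{1,|G||_p\}$ and the $q$-part to lie in $\{1,|G||_q\}$, and never allows both to equal $1$ at once. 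Consequently, to reach a contradiction it suffices to produce a single nontrivial element that is simultaneously centralized by a full Sylow $p$-subgroup and a full Sylow $q$-subgroup: such an element would have class size coprime to both $p$ and $q$, which is forbidden for noncentral elements and impossible for central ones because $Z(G)=1$.

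I would construct such an element by centralizer-chasing in the spirit of the proof of Lemma~\ref{p}. Inside $H=C_G(x)$ I have the full Sylow subgroup $Q$ and (by the standing assumption that every centralizer contains a $p$-element) a nontrivial $p$-element; combining commuting elements of coprime orders through Lemma~\ref{factorKh}(iii) and repeatedly invoking the dichotomy above to decide, at each stage, which admissible value each class-size part takes, I would corner an element centralized by both a full $P$ and a full $Q$. Lemma~\ref{hz2} (or directly the forbidden class-size pattern) then closes the argument.

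The main obstacle is exactly this final construction. The centralizer $C_G(x)$ contains a full Sylow $q$-subgroup but only a \emph{proper} $p$-part, since $|x^G|_p=|G||_p<|G|_p$; hence the doubly-centralized element cannot be located inside $C_G(x)$ alone, and one is forced to pass through the centralizer of a suitable $p$-element, carefully tracking which of the two admissible values $1$ and $|G||_p$ (respectively $1$ and $|G||_q$) each relevant part of the class size assumes along the way. Controlling this bookkeeping so that the two full Sylow subgroups end up centralizing a common nontrivial element is the delicate point, and it is where the hypothesis $p>q>5$ (guaranteeing, via Lemmas~\ref{navarro} and the nondegeneracy of $|G||_p,|G||_q$) that the relevant Sylow subgroups behave as expected will be used.
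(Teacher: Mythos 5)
Your reductions are fine: excluding $p$-elements and $q$-elements from $C_G(Q)\setminus Z(Q)$, and pinning down $|x^G|_p=|G||_p$, $|x^G|_q=1$ for a hypothetical $\{p,q\}'$-element $x\in C_G(Q)$, all go through (modulo the degenerate cases $|G||_p=1$ or $|G||_q=1$, which you rightly flag as needing separate treatment). But the proposal has a genuine gap exactly where you admit it does: the ``centralizer-chasing'' that is supposed to produce a nontrivial element centralized simultaneously by a full Sylow $p$-subgroup and a full Sylow $q$-subgroup is never carried out, and it is the entire content of the lemma. Saying you would ``repeatedly invoke the dichotomy'' and ``corner'' such an element is a plan, not an argument; nothing in what you wrote explains why the chase terminates at a doubly-centralized element rather than cycling among elements each of which is centralized by only one full Sylow subgroup.

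For comparison, the paper does carry this step out, with a slightly different end-game. Starting from a hypothetical $\{p,q\}'$-element $a\in C_G(P)$, it first produces a $q$-element $b\in C_G(a)$ with $|b^G|_q=1$ (so $b$ centralizes a full Sylow $q$-subgroup), and then derives $b\in Z(G)$ --- contradicting $Z(G)=1$ --- by verifying, via Lemma~\ref{hz2}, that every conjugacy class of $G$ meets $C_G(b)$. That verification is a case analysis on the commuting decomposition $g=xyz$ ($x$ a $q$-element, $y$ a $p$-element, $z$ a $\{p,q\}'$-element) of an arbitrary $g\in G$: in each case, Lemma~\ref{factorKh}(iii) gives $C_G(uv)=C_G(u)\cap C_G(v)$, so $|u^G|$ divides $|(uv)^G|$, and the cap $|(uv)^G|_{\{p,q\}}\le|G||_{\{p,q\}}$ coming from the $\{p,q\}^*$-condition then forces an equality of $q$-parts, hence an inclusion of a full Sylow $q$-subgroup of one centralizer into another, which transports a conjugate of $b$ into $C_G(g)$. (The hypothesis $a\in C_G(P)$ enters because $C_G(a)\supseteq P$ allows the $p$-part $y$ to be conjugated into $C_G(a)$.) This explicit mechanism --- or some substitute for it --- is what your sketch is missing; without it the lemma is not proved.
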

\begin{proof}
Assume that there is a $\{p, q\}'$-element of $a\in C_G (P)$. Take $c\in C_G (a)$ a $q$-element. Assume that $|c^G|_q=|G||_q $. We have $|(ac)^G|_q=|c^G|_q $ and, consequently, $C_G (a)$ contains a $q$-element $d$ centralizing some Sylow $q$-subgroup of $ G $. Therefore, $C_G(a)$ contains a $q$-element $b$ such that $ |b^G|_q=1 $. To infer that $b\in Z(G) $, it suffices to show that the centralizer of each element contains an element conjugate to $b$.

Assume that there exists $g\in G$ such that $C_G(g)\cap b^G=\varnothing$.
Let $g=xyz=xzy=zxy=zyx\in G$, where $x$ is a $q$-element, $y$ is a $p$-element and $z$ is a $\{p, q \}'$-element.

Assume that $g=x$. Since $b$ lies in the center of a Sylow $q$-subgroup of $G$, $C_G(g)$ contains an element conjugate to $b$; a contradiction.

Assume that $g=xy$ with non trivial $y$. We may assume that $y\in C_G(a) $. Lemma \ref{p} yields $|a^G|_q=|(ay)^G|_q=|y^G|_q$. Consequently, the Sylow $q$-subgroup of $|C_G(y)|$ is conjugate to a $q$-subgroup of the group $C_G(a)$. Since $|g^G|_q = |y^G|_q $, the Sylow $q$-subgroup of $C_G(g)$ is conjugate to the Sylow $q$-subgroup of $C_G(y)$. Hence, $C_G(g)$ contains an element conjugate to $b$; a contradiction.

Since there the centralizer of any element of $G$ contains both $p$- and $q$-elements, we may assume that $x$ and $y$ are not trivial and $g=xyz$. Note that $|g^G|_q\geq|z^G|_q$. Therefore, $C_G(g)$ contain some Sylow $q$-subgroup of $C_G(xy)$; in particular, $C_G(g)\cap b^G\neq\varnothing$.
Therefore, $Z(G)$ is non trivial; a contradiction with the assumption of the theorem. %Таким образом $|G|_{p,q}=|L|_{p,q}$ и утверждение 1 теоремы доказано.
Thus, $C_G(P)=Z(P)$.
Similarly we show that $C_G(Q)=Z(Q)$.
\end{proof}

\begin{prop}\label{abeP}
For every $g\in P$ we have $|g^G|_p=1$. For every $h\in Q$ we have $|h^G|_q=1$.
\end{prop}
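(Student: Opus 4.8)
The plan is to prove the equivalent statement that $P$ and $Q$ are abelian and then read off the Proposition from Lemma~\ref{navarro}, which applies since $p>q>5$ gives $p,q\notin\{3,5\}$. First I would record a dichotomy for $p$-elements. For a nontrivial $p$-element $g$, Lemma~\ref{p} gives $|g^G|_q=|G||_q>1$, so the $q$-part of $|g^G|_{\{p,q\}}$ equals $|G||_q$; since $G\in\{p,q\}^*$ forces $|g^G|_{\{p,q\}}\in\{|G||_p,|G||_q,|G||_{\{p,q\}}\}$, the value $|G||_p$ (whose $q$-part is $1$) is excluded, and if in addition $|g^G|_p>1$ then $|G||_q$ (whose $p$-part is $1$) is also excluded, leaving $|g^G|_{\{p,q\}}=|G||_{\{p,q\}}$, that is, $|g^G|_p=|G||_p$. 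Hence $|g^G|_p\in\{1,|G||_p\}$ for every $p$-element, and by Lemma~\ref{navarro} the first assertion of the Proposition is equivalent to ``$P$ is abelian''. The second assertion is the mirror statement, obtained by interchanging $p$ and $q$ and invoking Lemmas~\ref{q} and~\ref{pcentralizer} in place of Lemma~\ref{p}; so it suffices to show that $P$ is abelian.

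Suppose not. By the dichotomy and Lemma~\ref{navarro} there is then a $p$-element $g\in P$ with $|g^G|_p=|G||_p>1$, so $|C_G(g)|_p=|G|_p/|G||_p<|G|_p$ and $g$ lies in no $Z(P^t)$. Fix a nontrivial $a\in Z(P)$, so that $|a^G|_p=1$ and, by Lemma~\ref{p}, $|a^G|_q=|G||_q$. I would obtain the contradiction by showing $a\in Z(G)$ via Lemma~\ref{hz2}, i.e. by proving that $C_G(x)$ meets $a^G$ for every $x\in G$. For this it is enough to find, for each $x$, a Sylow $p$-subgroup $P^t$ of $G$ with $Z(P^t)\le C_G(x)$: then $a^t\in Z(P^t)\le C_G(x)$ and $a^t\in a^G$. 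Since this would apply to every nontrivial $a\in Z(P)$, it would give $1\neq Z(P)\le Z(G)=1$, the desired contradiction. A useful reduction to keep in mind is that a $p$-element $y$ with $|y^G|_p=1$ is centralized by a full Sylow $p$-subgroup $P^s$, whence $y\in C_G(P^s)=Z(P^s)$ by Lemma~\ref{pcentralizer}.

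The verification of ``$Z(P^t)\le C_G(x)$ for all $x$'' I would carry out by decomposing $x=x_px_qx_0$ into its commuting $p$-, $q$-, and $\{p,q\}'$-parts and arguing by cases on which parts are nontrivial, in the same spirit as the proofs of Lemmas~\ref{p} and~\ref{pcentralizer}. When $x$ is a $p$-element the claim is immediate, since a conjugate of $x$ lies in $P$ and $Z(P)\le C_G(x)$. The whole difficulty is concentrated in the cases where $x$ has nontrivial $q$- or $\{p,q\}'$-part: there one transfers control of the $p$-part of class sizes between commuting coprime elements through the identity $C_G(uv)=C_G(u)\cap C_G(v)$ of Lemma~\ref{factorKh}(iii), combining the dichotomy with Lemma~\ref{q} (every nontrivial $q$-element has $p$-class-part $|G||_p$), with the reduction above, and with the presence of the bad element $g$ (which, via Lemma~\ref{qbolsheqchasti}, guarantees $q$-elements in the relevant centralizers). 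I expect the main obstacle to be precisely these mixed cases, and in particular pinning down exactly where non-abelianness of $P$ (the existence of $g$) is indispensable: one must upgrade ``$C_G(x)$ contains some central $p$-element'' to ``$C_G(x)$ contains an entire $Z(P^t)$'', ruling out the scenario in which every $p$-element of $C_G(x)$ has $p$-class-part $|G||_p$, and it is here that the coprime-centralizer transfer and the chosen bad element $g$ must be pushed hardest.
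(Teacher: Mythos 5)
Your setup is sound and coincides with the paper's: the dichotomy $|g^G|_p\in\{1,|G||_p\}$ for $p$-elements (from Lemma~\ref{p}, Lemma~\ref{qbolsheqchasti} and the $\{p,q\}^*$-condition), the reformulation via Lemma~\ref{navarro}, and the plan of manufacturing a nontrivial central element through Lemma~\ref{hz2}. But the proof stops exactly where the work begins: your target is ``for every $x\in G$ there is a Sylow $p$-subgroup $P^t$ with $Z(P^t)\le C_G(x)$,'' and you yourself flag the mixed cases as the main obstacle without resolving them, so as written this is a plan rather than a proof. More importantly, the target is probably the wrong one. The transfer $|(uv)^G|_r=|u^G|_r$ lets you place a full Sylow $r$-subgroup of $C_G(v)$ inside $C_G(u)$ only when the $r$-parts of the relevant class sizes are pinned to one exact value. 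For $r=q$ this is available: Lemma~\ref{p} forces $|x^G|_q=|G||_q$ for every element with nontrivial $p$-part, so all those centralizers have $q$-part exactly $|G|_q/|G||_q$ and their Sylow $q$-subgroups are conjugate to one another. For $r=p$ it is not: $p$-elements split into those with $|y^G|_p=1$ and those with $|y^G|_p=|G||_p$, and in the case where $C_G(x)$ contains only $p$-elements of the first kind you get a single element of some $Z(P^s)$ inside $C_G(x)$ --- not the whole of $Z(P^s)$, and not a conjugate of your chosen $a$ --- and the argument stalls there.

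The paper sidesteps this by changing the target. It first proves (Lemma~\ref{a1}) that for a bad element $g$ with $|g^G|_p=|G||_p$ the Sylow $p$-subgroup $H$ of $C_G(g)$ is abelian and lies in $Z(C_G(g))$; this uses Lemma~\ref{pcentralizer} to get $|x^G|_p=|G||_p$ for $p'$-elements $x\in C_G(g)$, then Lemma~\ref{navarro} applied inside $C_G(x)$, and finally Lemma~\ref{hz2} inside $C_G(g)$. It then shows (Lemma~\ref{a3}) that a Sylow $q$-subgroup $D$ of $C_G(g)$ lands, up to conjugacy, in the centralizer of every element of $G$ --- precisely because the $q$-parts are pinned to $|G||_q$ --- so Lemma~\ref{hz2} forces $1\neq D\le Z(G)$. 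If you want to rescue your route you would need an analogue of Lemma~\ref{a1} plus a way around the $1$-versus-$|G||_p$ ambiguity; switching to the $q$-side object $D$ is the cleaner fix.
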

\begin{proof}
%\begin{lem}\label{prop1lem1}
%Пусть $a\in G$ -- $p$-элемент такой, что $\pi(C_G(a))\subseteq\{p,q\}$. Тогда $|a^G|_p=1 $.
%\end{lem}
%\begin{proof}
%Допустим, что $|a^G|_p=|G||_p$.
%Пусть $h\in C_G(a)$ -- $q$-элемент. По лемме \ref{q} получаем, что $|h^G|_p=|G||_p$. Для любого $\{p,q\}'$-элемента $x\in C_G(h)$ имеем, $|(hx)^G|_p=|h^G|_p$. Следовательно в $x^G$ найдется элемент централизующий $a$. Следовательно $x=1$ и $\pi(C_G(h))=\{p,q\}$.
%Тогда $|(ah)^G|_p=|a^G|_p$ следовательно $C_G(h)$ содержит центр некоторой силосовской $p$-подгруппы группы $G$. Можно считать, что $C_G(h)>Z(P)$.

%Пусть $r\in G$ --- $\{p,q\}'$-элемент такой, что $C_G(r)\cap P\in Syl_p(C_G(r))$, $f\in C_G(r)\cap P$. Допустим, что $f\in Z(P)$. Тогда $f\in C_G(h)$. Имеем $|(rf)^G|_q=|f^G|_q$. Следовательно $C_G(r)$ содержит Силосовскую $q$-подгруппу группы $C_G(f)$. Поскольку $Z(P)<C_G(h)$, в частности $f\in C_G(h)$, то в $C_G(r)$ найдется элемент сопряженный с $h$; противоречие с тем, что $\pi(C_G(h))=\{p,q\}$. Таким образом $C_G(r)\cap Z(\overline{P})=1$ для любой $\overline{P}\in Syl_p(G)$.

%Следовательно $f\not \in Z(P)$. Тогда $|f^G|_p=|G||_p$. Следовательно $|(fr)^G|_p=|f^G|_p$ и в $C_G(r)$ найдется центр некоторой силосовской $p$-подгруппы; противоречие.
%\end{proof}

%Допустим, что найдется $p$-элемент $g\in P$ такой, что $|g|_p=|G||_p$. %Покажем, что $Z(P)\leq Z(G)$.
\begin{lem}\label{a1}
Assume that there is a $p$-element $g\in P$ with $|g|_p=|G||_p$. If $H\in Syl_p (C_G (g))$, then $H<Z(C_G (g))$.\end{lem}
\begin{proof}
Since the centralizer of each element contains an element of order $q$, we obtain $\pi(C_G(g))\neq\{p\}$.
Take a $p'$-element $x\in C_G (g)$. Then $|(gx)^G|_p=|g^G|_p$; therefore $x$ is contained in some Sylow $p$-subgroup $H$ of $C_G(g)$. It follows from Lemma \ref{pcentralizer}, that $|x^G|_p=|G||_p$.
Let $b\in H$. Then $|x^G|_p=|(xb)^G|_p$. Hence, $|b^{C_G (x)}|_p=1$. It follows from Lemma \ref{navarro} that $H$ is abelian. Thus, for each $h\in C_G(g)$ there exists $h'\in h^G$ such that $H<C_G(h')$. Lemma \ref{hz2} implies that $H\leq Z(C_G(g))$.
\end{proof}
\begin{lem}\label{a3}
Suppose that $g$ is the same as in Lemma \ref{a1} and $D\in Syl_q(C_G (g))$. The centralizer of each element of $G$ includes a subgroup conjugate to $D$.
\end{lem}
\begin{proof}
Take $h\in P$. If $h\in Z(P) $ then Lemma \ref{a1} yields $h\in Z(C_G (g))$ and hence $D\in C_G(h)$. We may assume that $|h^G|_p=|G||_p$. Lemma \ref{a1} implies that $C_G(h)\leq C_G(z)$, where $z \in Z(P)$. Since $|z^G|_q=|h^G|_q $ and $D^a<C_G(z)$ for some $a\in G$, it follows that $C_G(h)$ includes a subgroup conjugate to $D$. Thus, the centralizer of each $p$-element includes a subgroup conjugate to $D$. Take $x\in G$. Since there is the centralizer of each element contains a $p$-element, we may assume that $x=ab=ba$ where $a$ is a $p'$-element and $b$ is a nontrivial $p$-element. Lemma \ref{p} yields $|x^G|_q=|b^G|_q$. Hence, $C_G(x)$ includes a Sylow $q$-subgroup of $C_G(b)$. Since the $q$-subgroup of $C_G(b)$ is conjugate to $D$, it follows that $C_G(x)$ includes a subgroup conjugate to $D$.
\end{proof}

Lemmas \ref{a3} and \ref{hz2} shows that $D\leq Z(G)$. Since $D$ is non trivial, then $Z(G)>1 $; a contradiction. Thus, $|g^G|_p=1$ for every $g\in P$. For $h\in Q$ the proof is similar.
\end{proof}

Note that from Lemma \ref{navarro} it follows that the statement of Proposition \ref{abeP} is equivalent to the fact that the Sylow $p$- and $q$-subgroups of $G$ are abelian.

%Таким образом, для любого $p$-элемента $h$ имеем $|h^G|_p=1$. Заметим, что из леммы \ref{navarro} следует, что утверждение Предложения \ref{abeP} эквивалентно тому, что силосовская $p$-подгруппа группы $G$ абелева.

\begin{prop}\label{nachalo}
$C_G(P)$ or $C_G(Q)$ contain $\{p,q\}'$-element.
\end{prop}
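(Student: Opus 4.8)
The plan is to translate the conclusion into a statement about a single class size and then split into cases. By Proposition~\ref{abeP} and Lemma~\ref{navarro} the subgroups $P$ and $Q$ are abelian, and the $\{p,q\}^*$-condition forces the $\{p,q\}$-part of every class size to lie in $\{|G||_p,|G||_q,|G||_p|G||_q\}$; hence $|x^G|_p\in\{1,|G||_p\}$ and $|x^G|_q\in\{1,|G||_q\}$ for \emph{every} $x\in G$. The key observation is that $|z^G|_p=1$ means $C_G(z)$ contains a full Sylow $p$-subgroup $P_0$, so $z\in C_G(P_0)$ and a conjugate of $z$ lies in $C_G(P)$; likewise $|z^G|_q=1$ puts a conjugate of $z$ into $C_G(Q)$. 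Thus it suffices to produce a $\{p,q\}'$-element $z$ with $|z^G|_p=1$ or $|z^G|_q=1$. Such a $z$ is available to begin with because $\pi(G)\neq\{p,q\}$ by Lemma~\ref{pqTh}.

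First I would dispose of the degenerate cases $|G||_p=1$ or $|G||_q=1$: if, say, $|G||_p=1$, then $|x^G|_p=1$ for all $x$, so any $\{p,q\}'$-element is already $p$-full and we are done. Hence I may assume $|G||_p>1$ and $|G||_q>1$. Then for a $\{p,q\}'$-element $z$ the pair $(|z^G|_p,|z^G|_q)$ cannot equal $(1,1)$, since its $\{p,q\}$-part would be $1\notin\{|G||_p,|G||_q,|G||_p|G||_q\}$; and in the two remaining off-diagonal cases $(1,|G||_q)$ and $(|G||_p,1)$ the first paragraph finishes the proof at once.

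The remaining, and genuinely hard, case is that \emph{every} $\{p,q\}'$-element $z$ satisfies $|z^G|_p=|G||_p$ and $|z^G|_q=|G||_q$. Combined with Lemmas~\ref{p} and~\ref{q}, this says that the only $p$-full elements are the $p$-elements and the only $q$-full elements are the $q$-elements. From this I would first deduce that $P$ and $Q$ are self-centralizing, $C_G(P)=P$ and $C_G(Q)=Q$: a $q$-element or a $\{p,q\}'$-element centralizing all of $P$ would be $p$-full without being a $p$-element, which the present case excludes. The task is then to contradict $Z(G)=1$. I would attack this through the faithful coprime action of $L:=N_G(P)/P$ on the abelian group $P$, exploiting the decomposition $P=C_P(\ell)\times[P,\ell]$ of Lemma~\ref{Gore5hzOcomutantePstavtomor}: either some nontrivial $p'$-element of $N_G(P)$ acts fixed-point-freely, exposing a Frobenius section governed by Lemma~\ref{frgroup}, or one passes to a nonabelian simple section $S\le\overline{G}\le\Aut(S)$ and invokes Lemma~\ref{hz} to produce an element whose order avoids the distinguished prime yet whose class size is divisible by $q$, in conflict with the rigid pattern just established.

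The main obstacle is exactly this last step. The difficulty is structural rather than computational: the hypothesis of the classification Lemma~\ref{GK}, a disconnected prime graph, is \emph{not} available for $G$ itself. Indeed $C_G(x)$ contains both a $p$-element and a $q$-element for every $x$ (the latter by Lemma~\ref{qbolsheqchasti}), so in $GK(G)$ every prime is joined to both $p$ and $q$ and the graph is connected; consequently the disconnected-prime-graph and almost-simple input of Lemmas~\ref{GK} and~\ref{hz} must be located inside a carefully chosen section of $G$ rather than in $G$. Once such a section is identified and the self-centralizing, both-maximal configuration is shown to be impossible, Proposition~\ref{nachalo} follows; combined with Lemma~\ref{pcentralizer}, which forbids $\{p,q\}'$-elements in $C_G(P)$ and in $C_G(Q)$, it yields the contradiction proving the Theorem.
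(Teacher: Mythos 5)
Your opening reduction is correct and matches the paper's starting point: since $|z^G|_p=1$ is equivalent to a conjugate of $z$ centralizing $P$, the negation of the proposition (after disposing of $|G||_p=1$ or $|G||_q=1$) is exactly that every $\{p,q\}'$-element is both $p$-full and $q$-full, which together with Lemmas~\ref{p} and~\ref{q} is the paper's hypothesis ``$C_G(P)\leq P$ and $C_G(Q)\leq Q$''. You also correctly diagnose the structural obstacle: $GK(G)$ itself is connected (every centralizer contains a $p$-element and, by Lemma~\ref{qbolsheqchasti}, a $q$-element), so Lemmas~\ref{GK} and~\ref{hz} can only be applied to a section of $G$.

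The gap is that you stop precisely where the proof begins. Everything after ``The remaining, and genuinely hard, case'' is a plan, not an argument, and that case is the technical core of the entire paper: the author first produces a commuting pair $\overline{P}\in Syl_p(C_G(x))$, $\overline{Q}\in Syl_q(C_G(x))$ for a $\{p,q\}'$-element $x$ (via Lemmas~\ref{pat}, \ref{navarro}, \ref{HallEx}), then proves that $C_G(\overline{P})$ modulo $\overline{P}$ is a solvable Frobenius or double Frobenius group with $\{p\}$ as a prime-graph component (Lemma~\ref{rc}), that $Z(C_G(h))$ is a canonical subgroup $P_h$ of order $|G|_p/|G||_p$ for suitable $p$-elements $h$ (Lemmas~\ref{cent}, \ref{cenq}), that the condition $\pi(C_G(g))=\{p,q\}$ propagates between commuting $p$- and $q$-elements (Lemma~\ref{piCgrrpq}), and then, in the long Lemma~\ref{abe}, that $G$ must have a non-abelian composition factor of order divisible by $p$ or $q$ --- ruling out the purely solvable configurations by a delicate analysis of $O_{\{p,q\}}(G)$, the Fitting subgroup, and Frobenius kernels versus complements. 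The final contradiction comes from $Soc(G/O_{p'}(G))$ and the fact that a simple socle of order divisible by $p$ would put $|G|_p$ into $N(G)$. Your sketch (coprime action of $N_G(P)/P$ on $P$, ``either a fixed-point-free $p'$-element or an almost simple section'') does not substitute for any of this: in particular it gives no mechanism for excluding the solvable case, which is where most of the work lies, and no justification that the relevant section is almost simple with disconnected prime graph so that Lemma~\ref{hz} applies. As written, the proposal establishes only the easy equivalence and the trivial subcases, and leaves the proposition unproved in the main case.
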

\begin{proof}
Suppose that $C_G(P)\leq P$ and $C_G(Q)\leq Q$. Lemma \ref{pqTh} shows that $\{p,q\}\neq\pi(G)$.
Let $x\in G$ be a $\{p, q\}'$-element. Since $|x^G|_{\{p, q\}}=|G||_{\{p,q\}}$, we see that $|x^G|_{\{p,q\}}=|(xa)^G|_{\{p,q\}}$ for each $\{p,q\}$-element $a\in C_G(x)$. Hence, $|a^{C_G(x)}|_{\{p,q\}}=1 $. From Lemmas \ref{pat} and \ref{navarro}, a Hall $\{p,q\}$-subgroup of $C_G(x)$ exists and is abelian. By Lemma \ref{HallEx} all Hall $\{p,q\}$-subgroups of $C_G(x)$ are conjugate in $C_G(x)$. Take $\overline{P}\in Syl_p(C_G (x))$ and $\overline{Q}\in Syl_q(C_G(x))$ with $\overline{Q}\in C_G(\overline{P})$. Since $P$ and $Q$ are arbitrary we may assume that $\overline{P}<P$ and $\overline{Q}<Q$.

\begin{lem}\label{rc}
Let $C=C_G(\overline{P})$. Then $C$ is solvable, $C/\overline{P}$ is a Frobenius group or a double Frobenius group, $p$ is a connected component of the graph $GK(C/\overline{P})$. In particular, one of the groups $\overline{Q}$ and $P/\overline{P}$ is cyclic.
\end{lem}
\begin{proof}
Since $P$ is an abelian group and $|P|/|\overline{P}|=|G||_p$, it follows that $p\in\pi(C/\overline{P})$. Note that $p$ is a connected component of the prime graph of the group $\overline{C}=C/\overline{P}$. Otherwise, there exists a $p'$-element $y\in G$ such that $|C_G(y)|_p>|\overline{P}|$ and hence $|y^G|_p=1$; a contradiction with $C_G(P)\leq P$.

Since $x\in C$, it follows that $\pi(C)\neq\{p, q\}$. Let $z\in C$ be a $\{p,q\}'$-element. The Hall $\{p,q\}$-subgroup of $C_G(z)$ is abelian. Lemma \ref{HallEx} implies that all Hall $\{p,q\}$-subgroups of $C_G(z)$ are conjugate. Hence, $C_G(z)$ includes a Hall $\{p,q\}$-subgroup $X$ which is containing $\overline{P}$. Since $X$ is abelian, it follows that $X<C$. Thus, each $\{p,q\}'$-element of $C$ centralizes some Sylow $q$-subgroup of $C$. In particular, $GK(\overline{C})$ has two connected components.

Assume that $\overline{C}$ is not solvable. Lemma \ref{GK} shows $\overline{C}\simeq K.S.A$, where $K$ is a nilpotent $\pi_1(\overline{C})$-subgroup, $S$ is simple and $A\leq Out(S)$ is a $\pi_1(\overline{C})$-subgroup. Since $|\pi_1(\overline{C})|>1$, we have $\{p\}=\pi_2(\overline{G})$. Hence, $p$ does not divide $|K||A|$. Assume that $q\in\pi(K)$. Since $K$ is nilpotent, the Sylow $q$-subgrop $R$ of $K$ is unique and every Sylow $q$-subgroup of $\overline{C}$ includes $R$. Since the centralizer of each $p'$-element of $C$ contains some Sylow $q$-subgroup of $C$, it follows that all $p'$-elements of $\overline{C}$ centralize $R$. Since $S.A$ is generated by all its $p'$-elements, it follows that $R\leq Z(\overline{C})$; a contradiction with the property that $GK(\overline{C})$ is disconnected.

Therefore $|\overline{Q}|$ divides $|S.A|$. From Lemma \ref{hz}, if follows that $S.A$ contain a $p'$-element $b$ such that $|b^{S.A}|_q>1$; a contradiction with the property that the centralizer of each $p'$-element of $\overline{C}$ includes a Sylow $q$-subgroup of $\overline{C}$. Thus, $\overline{C}$ is a solvable Frobenius group or a double Frobenius group. By Lemma \ref{frgroup}, the Sylow $p$-subgroup or $q$-subgroup of $\overline{C}$ is cyclic.
\end{proof}

\begin{lem}\label{cent}
Take a $p$-element $h$ such that $C_G(h)$ contains a $\{p,q\}'$-element. Then there exists a subgroup $P_h$ of order $|G|_p/|G||_p$ such that $C_G(h)=C_G(P_h)$, in particular $P_h=Z(C_G (h))$.
\end{lem}
\begin{proof}
By definition, $C_G(h)$ contains a $\{p,q\}'$-element $x$. Take $P_h\in Syl_p(C_G(x))$ with $h\in P_h$. We said above that the Hall $\{p,q\}$-subgroup of $ C_G (x) $ is abelian. Obviously, $C_G(h)\geq C_G(P_h)$, and we verify the inverse inclusion. Take $Q_h\in Syl_q(C_G(x))$ with $Q_h\leq C_G (P_h) $. Since $|(hy)^{G}|_q=|h^{G}|_q $ for each $\{p,q\}'$-element $y\in C_G (h)$, it follows that there exists $a\in y^{C_G(h)}$ with $Q_h<C_{C_G (h)}(a)$. We have $|(ab)^{C_G(h)}|_p=|b^{C_G(h)}|_p=|a^{C_G (h)}|_p$, where $b\in Q_h $. Hence, $C_{C_G (h)}(a)$ includes some Sylow $p$-subgroup of $C_{C_G (h)}(b)$. Therefore $a^{C_G (h)}$ contains $a'$ such that $P_h Q_h<C_G(a')$. Hence, every $p'$-element of $C_G(h)$ centralizes some Sylow $p$-subgroup of $C_G(h)$. Since the Sylow $p$-subgroup of $G$ are abelian, every $p$-element of $C_G(h)$ centralizes some subgroup conjugate to $P_h$ in $C_G(h)$.
If $ba=ab=c\in C_G(h)$, $\pi(|a|)\subseteq\{p\}'$ and $\pi(b)=\{p\}$, then $|c^{C_G (h)}|_p=|a^{C_G (h)}|_p $. Hence, $C_G(c)$ includes a subgroup conjugate to $P_h$ in $C_G(h)$.

Hence, each conjugacy class of $C_G(h)$ contains an element of $C_G(P_h)$. Lemma \ref{hz2} implies that $C_G(h)\geq C_G(P_h)$. Thus, $P_h<Z(C_G (h))$.

Since $|C_G(y)|_p=|P_h|$ for every $p'$-element $y\in G$, it follows that $Z(C_G (h))=P_h$.
\end{proof}

\begin{lem}\label{cenq}
If $h$ is a $q$-element such that $C_G(h)$ contains a $\{p,q\}'$-element, then there exists a subgroup $\widehat{Q}_h$ of order $|G|_q/|G||_q$ with $C_G(h)=C_G(\widehat{Q}_h)$; in particular, $\widehat{Q}_h=Z(C_G (h))$. Moreover $C_G(h)/\widehat{Q}_h$ is a Frobenius group or double Frobenius group.
\end{lem}
\begin{proof}
Similar to the proof of Lemma \ref{cent}.
\end{proof}

\begin{lem}\label{piCgrrpq}
Take a $p$-element $g\in G$ with $\pi(C_G(g))=\{p,q\}$. For each $q$-element $h\in C_G(g)$ we have $\pi(C_G(h))=\{p,q\}$.
For each $a\in\langle g\rangle$ we have $\pi(C_G(a))=\{p,q\}$.
Take a $q$-element $g'\in G$ with $\pi(C_G(g'))=\{p,q\}$. For each $p$-element $h'\in C_G(g')$ we have $\pi(C_G(h'))=\{p,q\}$.
For each $b\in\langle g'\rangle$ we have $\pi(C_G(b))=\{p,q\}$.
\end{lem}
\begin{proof}
Assume that there exists a $\{p,q\}'$-element $x\in C_G(h)$. Lemma \ref{q} yields $|h^G|_p=|G||_p$. We have $|(xh)^G|_p=|h^G|_p$. Therefore, $x^G$ contains some $y\in C_G(g)$; a contradiction.
The proof of the assertion about $g'$ is similar.
We have $\langle g\rangle<C_G(h)$. Since $\pi(C_G(h))=\{p,q\}$, we obtain $\pi(C_G(a))=\{p,q\}$ for all $a\in\langle g\rangle$.
\end{proof}

Let $h$ be a $p$-element such that $\pi(C_G(h))\neq\{p,q\}$. Put $P_h=Z(C_G (h)$. Lemma \ref{cent} shows $|P_h|=|G|_p/|G||_p$. Since $Q_h$ is a Sylow $q$-subgroup of $C_G(h)$ and $P_h=Z(C_G (h))$, for each $p$-element $h$ the subgroup $P_h$ is uniquely determined. For all $a\in P_h$ we have $P_h=P_a$. Therefore, $P_h$ and $P_v$ either coincide or intersect trivially, where $v$ is the $p$-element whose centralizer contains the $\{p,q\}'$-element. Let $x \in C_G(h)$ be a $\{p,q\}'$-element. Therefore $P_h$ is a Sylow $p$-subgroup of $C_G(x)$. From Lemma \ref{rc} infer that $C_G(h)/P_h$ is a solvable Frobenius group or double Frobenius group.
The centralizer of each $p'$-element of $C_G(h)$ includes a group conjugate to $Q_h$. We have $p$ is a connected component of the graph $GK(C_G(h)/P_h)$. The centralizer of each $p'$-element of $C_G(h)$ includes a group conjugate to $Q_h$. In particular, by Lemma \ref{cenq} there exists $\widehat{Q}_{g'}$ for each $g'\in Q_h$ with $\widehat{Q}_{g'}=Z(C_G(g'))$. Since $Q_h<C_G(x)$, we conclude that $\widehat{Q}_{g'}=Q_h$. Since $P_h<C_G(Q_h)$, we find that $P_h\in Syl_p(C_G(g'))$.

\begin{lem}\label{abe}
The group $G$ contains a non-abelian composition factor whose order is a multiple of $p$ or $q$.
\end{lem}
\begin{proof}
Suppose that the orders of non-abelian composition factors of $G$ are not divisible by $p$ and $q$.
Consider $\widetilde{G}=G/O_{\{p, q\}'}(G)$ and the natural homomorphism $\widetilde{\ }:G\rightarrow \widetilde{G}$. Suppose that $\widetilde{P}\lhd\widetilde{G}$. Let $x\in G$ be a $p$-element such that $C_G(x)$ contains a $\{p, q\}'$-element. Since $\widetilde{Q_x}<C_G(\widetilde{P_x})$, it follows that $\widetilde{P}.\widetilde{Q_x}/\widetilde{P_x}$ is a Frobenius group with the kernel $\widetilde{P}/\widetilde{P_x}$ and the complement $\widetilde{Q_x}\widetilde{P_x}/\widetilde{P_x}$. Lemma \ref{frgroup} shows that $Q_x$ is a cyclic group. Let $H\in Hol_{\{p,q\}}(C_{\widetilde{G}}(\widetilde {Q_x}))$. Note that $ H/\widetilde{Q_x}$ is a Frobenius group and hence $Q/Q_x$ is a cyclic group. Take $b\in Q$ with $\langle bQ_x\rangle=Q/Q_x$ and assume that $b^z\in Q_x$ for some $z<|b|$. Lemma \ref{piCgrrpq} implies that $C_G(b)$ contains a $\{p,q\}'$-element. Therefore there exists $\widehat{Q}_b$. We have that for each $y\in Q_x$ there exists $\widehat{Q}_y$ and $\widehat{Q}_y=Q_x$. Therefore $Q_x=\widehat{Q}_b$; a contradiction. Thus, $Q=\langle a\rangle \times\langle b\rangle$, where $a$ satisfies $\langle a\rangle=Q_x$.

Assume that $O_{\{p,q\}}(G)$ is non trivial. The centralizer of each element of $O_{\{p,q\}}(G)$ contain a $p$-element. We may assume that $C=C_{O_{\{p,q\}}(G)}(x)>1$. It follows from Lemma \ref{cent} that $C=C_{O_{\{p,q\}}(G)}(P_x)$. The Sylow $p$-subgroup of $C_G(P_x)/P_x$ acts freely on $C/P_x$. Therefore $P/P_x$ is cyclic. Hence, $P\simeq P_x.\langle\overline{y}\rangle$. Take the preimage $y\in P$ of $\overline{y}$. Assume that $y^m \in P_x $, where $m<|y|$. If $\pi(C_G(y))=\{p,q\}$ then we arrive at a contradiction with Lemma \ref{piCgrrpq}. Therefore there exists $P_y$ and $P_y\neq P_y\cap P_x\neq 1$; a contradiction. Thus, $P=P_x\times\langle y \rangle $. We may assume that $y\in C_G(b)$. Thus, $C_G(\widetilde{ab})$ trivially intersects with $\widetilde{P}$. Since $\widetilde{P}$ is a normal subgroup of $\widetilde{G}$, it follows thet $|(ab)^G|_p=|P|$; a contradiction. Thus, the group $O_{\{p,q\}}(G)$ is trivial.

Observe that $C_G(Q_x)$ contains a $\{p,q\}'$-element. Lemma \ref{cenq} implies that $C_G(Q_x)$ is solvable and $q$ is a connected component of $GK(C_G(Q_x)/Q_x)$. Take $\overline{H}\in Hol_{p'}(C_G(Q_x)/Q_x)$. Since $Q=Q_x\times\langle b\rangle$, by the Schurr-Zassenhaus theorem $C_G(Q_x)$ contains a group $H$ isomorphic to $\overline{H}$. The number $q$ is the connected component of the graph $GK(H)$. It follows from Lemma \ref{GK} that $H$ is a Frobenius group or a double Frobenius group. Take $N\in Hol_{q'}(H)$. Since $P_x$ is the unique Sylow $p$-subgroup of $C_G(Q_x)$ and each $q'$-element of $C_G(Q_x)$ centralizes some Sylow $p$-subgroup of $C_G(Q_x)$, we have $N\in C_G(P_x)$. Hence, $N$ acts freely on $P/P_x$. Consequently, $N$ is cyclic. Therefore, $H$ is a Frobenius group. Since $C_{C_G(Q_x)}(P_x)\unlhd C_G(Q_x)$, we get $N\unlhd C_G (Q_x)$. Hence, $N$ is the Frobenius kernel of $H$.

Therefore $P.H/P_x$ is a double Frobenius group. Let $R<G$ be the minimal preimage of $P.H/P_x$. Since $N$ is cyclic, we infer that $R\cap P=[P,N]\times C_{R\cap P}(N)$. It follows from $C_{R\cap P}(N)<P_x$ that $C_{R\cap P}(N)$ is normal in $R$, and consequently  $C_{R\cap P}(N)$ is trivial. Since $P_x<C_P(N)$, we obtain $R\cap P_x=1$. Therefore, $R\simeq P.H/P_x$ and $P=[P,N]\times P_x$. For each $r\in[P,N]$, some subgroup of $C_R(r)$ is conjugate to $B\in Syl_q(R)$.

Let $l\in P\setminus P_x$ be such that $C_G(l)\cap Q\in Syl_q(C_G(l))$. Since $P=[P, N]\times P_x$, we can uniquely represent $l$ as $vg$, where $v\in[P,N]$ and $g\in P_x$. Assume that $g$ is non trivial. Take $w\in C_G(l)\cap Q$. Since $Q=Q_x\times\langle b\rangle$, we see that $w=cd$ with $c\in Q_x$ and $d\in \langle b\rangle$. We have $l=l^w=(vg)^{cd}=v^{cd}g^d$. The group $\langle b \rangle$ acts on $P_x$ without fixed points, and $[P, N]$ is a normal subgroup of $P.Q$. Consequently, $g^d\neq g$; which is a contradiction. Therefore, $l\in[P, N]$. Consequently, if $g$ is a $p$-element with $C_G(g)\cap Q \in Syl_q(C_G(g))$ then $g\in P_x$ or $g\in[P,N]$. In particular, for every $p$-element $h$ it follows that $h^G\cap P_x\neq\varnothing$ or $h^G\cap [P,N]\neq\varnothing$.

Assume that there exists $r\in P$ with $\pi(C_G(r))=\{p,q\}$. If $g\in r^G$ satisfies $C_G(g)\cap Q\in Syl_q(C_G(g))$, then $g\in[P,N]$. Hence, there exists $g'\in r^G$ such that $b\in C_G(g')$. Lemma \ref{piCgrrpq} yields that $\pi(C_G(b))=\{p,q\}$. Therefore, $\pi(C_G(h))=\{p,q\}$ for all $h\in[P,N]$.
Take a $p'$-element $f\in G$ and a $p$-element $s\in C_G(f)$. Since $\pi(C_G(s))\neq\{p,q\}$ we infer that $s^G\cap P_x\neq \varnothing$. We may assume that $s\in P_x$. Consequently, $f\in C_G(P_x)$. Since $Q<N_G(P_x)$, Lemma \ref{hz2} implies $P_x\lhd G$. By Lemmas \ref{Gore332Mashke} and \ref{Gore5hzOcomutantePstavtomor} there exists $X\lhd G$ with $P=X\times P_x$. Let $h=mn$ where $1\neq m\in P_x$ and $1\neq n\in X$. We have $h^G\cap P_x=\varnothing$ and $h^G\cap X=\varnothing$. We may assume that $C_G(h)\cap Q\in Syl_q(C_G(h))$. Since the centralizer of each element of $G$ contains an element of order $q$, we have $C_G(h)\cap Q>1$. If $y\in C_G(h)\cap Q$ then $y=cb^i$, where $c\in Q_x$. Therefore $h^y=m^{b^i}n^{cb^i}$. Since $X$ and $P_x$ are normal subgroup of $G$, we obtain $m^{b^i}\in P_x$ and $n^{ab^i}\in X$. Consequently, $m^{b^i}=m$. Since $\langle b\rangle$ acts freely on $P_x$, $b^i=1$ and $y=c$. Since $Q_x$ acts freely on $X$, we obtain $h^y\neq h$; a contradiction. This implies that $\pi(C_G(h))>\{p,q\}$ for all $h\in P$.

Take $c\in Q\setminus Q_x$. Then $C_P(c)<[P, N]$. Hence, $c$ is conjugate to some element of $\langle b\rangle$. Therefore, for any $c,d\in Q\setminus Q_x$ the groups $Q_c$ and $Q_d$ are conjugate. In a similar fashion, we can show that for all $c',d'\in Q\setminus Q_b$ the subgroups $Q_{c'}$ and $Q_{d'}$ are conjugate. Since the intersections $Q\setminus Q_x$ and $Q\setminus Q_b$ are not trivial, we infer that for all $c'',d''\in Q$, the subgroups $Q_{c''}$ and $Q_{d''}$ are conjugate. Since $P\lhd G$, the subgroups $P_{c''}$ and $P_{d''}$ are uniquely determined. Thus, for all $y\in P$, the subgroup $P_y$ is conjugate to $P_x$. Hence, each conjugacy class of $G$ contains an element of $C_G(P_x)$. Lemma \ref{hz2} yields that $P_x<Z(G)$; a contradiction.
Thus, $\widetilde{P}$ is not a normal subgroup of $\widetilde{G}$.

Similarly, we can show that $\widetilde{Q}$ is not a normal subgroup of $\widetilde{G}$.

Put $F=Fit(\widetilde{G})$ is a Fiting subgroup of $G$. The definition of $G$ and $\widetilde{G}$ implies that $F$ is a $\{p,q\}$-group. Since $p$ and $q$ do not divide the orders of non-abelian composition factors, $F$ is non trivial. Assume that $|F|$ is not divisible by $q$. Then $F$ is a $p$-group. Since the Sylow $p$-subgroup of $G$ is abelian, and any non-abelian composition factor of $G$ are not divisible by $p$, we get $\widetilde{P}=F$; a contradiction.

Therefore, $p,q\in\pi(F)$. Since $Q$ and $P$ are abelian, we obtain that $F$ contains a Hall $\{p,q\}$-subgroup of $C_G(g)$ for all $g\in F$. Consequently, $F=\widehat{P}\times\widehat{Q}$, where $\widehat{P}<P$ with $|\widehat{P}|=|P|/|G||_p$ and $\widehat{Q}<Q$ with $|\widehat{Q}|=|Q|/|G||_q$. The $\widetilde{P}/\widehat{P}$ acts freely on $\widehat{Q}$. Hence, $\widetilde{P}/\widehat{P}$ is cyclic. Take $h\in\widetilde{P}$ with $\langle h\widetilde{P}\rangle=\widetilde{P}/\widehat{P}$ and a $q$-element $s\in C_{\widetilde{G}}(h)$. Since $|C_{\widetilde{G}}(s)|_p=|P|/|G||_p$, we obtain $|h|\leq|P|/|G||_p $. Since $s\not\in\widehat{Q}$, we see that $s$ acts freely on $\widehat{P}$. Hence, $|\widetilde{G}/\widehat{P}|_p\geq|P|/|G||_p$. Consequently, $|\widetilde{G}/\widehat {P}|_p=|P|/|G||_p$ and $\langle h\rangle$ intersect trivially with $\widehat{P}$. Similarly we can show that $|\widetilde{G}/\widehat{Q}|_q=|Q|/|G||_q$ and $\widetilde{G}$ contains $f$, where $|f|=|Q|/|G||_q$ such that $f$ acting freely on $\widetilde{P}$. Thus, $|\langle\widehat{P},f\rangle|=|\langle\widehat{Q},h\rangle|=|G||_{\{p,q\}}$. We find that $\langle\widehat{Q},h\rangle$ is a Frobenius group with the kernel $\widehat{Q}$, $\langle\widehat{P},f\rangle$ is a Frobenius group with kernel $\widehat{P}$. This contradict the fact that the order of the kernel of the Frobenius group is greater than the order of the complement.
Thus, $G$ contains a non-abelian composition factor whose order is divisible by $p$ or $q$.

%Таким образом $p,q\in \pi(F)$. Поскольку $Q$ и $P$ абелевы, то для любого $g\in F$ $F$ содержит холлову $\{p,q\}$-подгруппу группы $C_G(g)$. Таким образом $F=\widehat{P}\times\widehat{Q}$, где $\widehat{P}< P$, $|\widehat{P}|=|P|/|G||_p$, $\widehat{Q}< Q$, $|\widehat{Q}|=|Q|/|G||_q$. Группа $\widetilde{P}/\widehat{P}$ действует на $\widehat{Q}$ регулярно. Следовательно $\widetilde{P}/\widehat{P}$ циклическая. Пусть $h\in\widetilde{P}$ такой, что $\langle h\widetilde{P}\rangle= \widetilde{P}/\widehat{P}$. Пусть $s\in C_{\widetilde{G}}(h)$ --- $q$-элемент. Поскольку $|C_{\widetilde{G}}(s)|_p=|P|/|G||_p$, то $|h|\leq |P|/|G||_p$. Поскольку $s\not\in \widehat{Q}$, то $s$ действует регулярно на $\widehat{P}$. Следовательно $|\widetilde{G}/\widehat{P}|_p\geq|P|/|G||_p$. Таким образом $|\widetilde{G}/\widehat{P}|_p=|P|/|G||_p$ и $\langle h\rangle$ тривиально пересекается с $\widehat{P}$. Аналогичным образом показывается, что $|\widetilde{G}/\widehat{Q}|_q=|Q|/|G||_q$ и $\widetilde{G}$ содержит $f$, где $|f|=|Q|/|G||_q$, такой, что $f$ действует регулярно на $\widetilde{P}$.
%Таким образом $|\langle\widehat{P},f\rangle|=|\langle\widehat{Q},h\rangle|=|G||_{\{p,q\}}$, $\langle\widehat{Q},h\rangle$ --- группа фробениуса с ядром $\widehat{Q}$, $\langle\widehat{P},f\rangle$ --- группа фробениуса с ядром $\widehat{P}$; противоречие с тем, что порядок ядра группы фробениуса больше порядка дополнения.
%Таким образом, в $G$ найдется не абелев композиционный фактор порядок которого делится на $p$ или $q$.
\end{proof}

Lemma \ref{abe} shows that $G$ contains a non-abelian composition factor whose order is divisible by $p$ or $q$. Assume that there exists a non-abelian composition factor of $G$ whose order is divisible by $p$.
Put $\overline{G}=G/O_{p'}$. Assume that $O_{p'}$ is not solvable. Then it is easy to show that the centralizer of any $p$-element is not solvable; a contradiction. Put $C=Soc(\overline{G})$. By definition, it follows that $|C|$ is a multiple of $p$. Assume that $C$ is not solvable. Then $C$ is a simple group; otherwise, the centralizer of some $p$-element is not solvable. We have $\overline{G}/C\leq Out(C)$. If $\overline{G}/C$ is divisible by $p$ then $\overline{G}$ contains an $p$-element $g$ that acting on $C$ as an outer automorphism. Since $p$ is greater than $5$, we infer that $g$ is a field or diagonal automorphism, and $C$ is a group of Lie type. Therefore $C_{\overline{G}}(g)$ is not solvable; a contradiction. Thus, $|C|_p=|G|_p $. Since $N(C)$ contains a number $\alpha$ with $\alpha_p=|C|_p$, we arrive at a contradiction.

Now $C$ is an elementary abelian $p$-group. Lemma \ref{abe} shows that $\overline{G}$ contains an non-abelian composition factor $\widetilde{S}$ whose order is a multiple of $p$. Let $S<\overline{G}$ be the minimal preimage of $\widetilde{S}$. Since $\widetilde{S}$ is a simple group, $S$ is generated by the set of $p$-elements. Since the Sylow $p$-subgroup of $\overline{G}$ is abelian and every Sylow $p$-subgroup of $\overline{G}$ includes $C$, we get $S<C_{\overline{G}}(C)$; this contradicts the property that the centralizer of each $p$-element of $\overline{G}$ is solvable.
%Пусть $\overline{G}=G/O_{p'}$. Допустим, что $O_{p'}$ неразрешима. Но тогда легко показать, что централизатор любого $p$-элемента неразрешим; противоречие. Пусть $C=Soc(\overline{G})$. По определения следует, что $|C|$ делится на $p$. Допустим, что $C$ неразрешим. Но тогда $C$--- простая группа иначе централизатор некоторого $p$-элемента будет неразрешим. И $\overline{G}/C\leq Out(C)$. Если $\overline{G}/C$ делится на $p$, то в $\overline{G}$ найдется элемент $g$ который действует на $C$ как внешний автоморфизм. Поскольку $p$ больше $5$, то $g$ полевой или диагональный автоморфизм, а $C$ группа лиева типа. Но тогда $C_{\overline{G}}(g)$ неразрешима; противоречие. Таким образом $|C|_p=|G|_p$. В $N(C)$ найдется число $\alpha$ такое, что $\alpha_p=|C|_p$; противоречие.

%Таким образом $C$--- элементарная абелева $p$-группа. Из леммы \ref{abe} следует, что в $\overline{G}$ найдется не абелев композиционный фактор $\widetilde{S}$ порядок которого делится на $p$. Пусть $S<\overline{G}$ минимальный прообраз группы $\widetilde{S}$. Поскольку $\widetilde{S}$ простая группа, то $S$ порождается множеством $p$-элементов. Поскольку силосовская $p$-подгруппа группы $\overline{G}$ абелева и $C$ лежит в любой силосовской подгруппе, то $S<C_{\overline{G}}(C)$; противоречие с тем, что централизатор любого $p$-элемента группы $\overline{G}$ разрешим.
\end{proof}

%Из предложения \ref{nachalo} следует, что найдется $\{p,q\}'$-элемент $a\in C_G(P)$, что противоречит с леммой \ref{pcentralizer}.
By Proposition \ref{nachalo}, there exists a $\{p,q\}'$-element $a\in C_G (P) $ in contradiction with Lemma \ref{pcentralizer}.

% Пусть $b\in C_G(a)$ $q$-элемент. Покажем, что $b\in Z(G)$. Для этого достаточно показать, что в централизаторе любого элемента найдется сопряженный с $b$ элемент.

%Пусть $g=xyz$ где $x$ --- $q$-элемент, $y$ --- $p$-элемент, $z$--- $\{p,q\}'$-элемент.

%Допустим, что $g=x$. Силосовская $q$-подгруппа группы $G$ абелева. Следовательно, в $C_G(g)$ найдется сопряженный с $b$ элемент.

%Допустим, что $g=xy$. Можно считать, что $y\in C_G(a)$. Имеем $|a^G|_q=|(ay)^G|_q=|y^G|_q$. Следовательно силосовская $q$-подгруппа группы $|C_G(y)|$ сопряжена с силосовской $q$-подгруппой группы $C_G(a)$. Поскольку $|g^G|_q=|y^G|_q$ то и силосовская $q$-подгруппа группы $C_G(g)$ сопряжена с силосовской $q$-подгруппой группы $C_G(y)$ и следовательно в $C_G(g)$ найдется элемент сопряженный с $b$.

%Поскольку в централизаторе любого элемента найдется $p$-элемент и $q$-элемент можем считать, $y$ не тривиален и $g=xyz$. Заметим, что $|g^G|_q\geq |z^G|_q$. И следовательно если в $C_G(xy)$ найдется сопряженный с $b$ элемент, то и в $C_G(g)$ найдется.

%Таким образом $Z(G)$ не тривиален; противоречие с условием теоремы.

%Таким образом $|G|_{p,q}=|L|_{p,q}$ и утверждение 1 теоремы доказано.
Thus, $|G|_{p, q}=|G||_{p, q}$. The proof of our theorem is complete.

%Допустим, что найдутся $p$-элемент $x$ и $q$-элемент $y$ такие, что $C_G(x)\cap C_G(y)>1$. Пусть $a\in C_G(x)\cap C_G(y)$. Тогда $|a^G|_{\{p,q\}}<|L|_{\{p,q\}}$ и следовательно $|a^G|_{\{p,q\}}=1$; противоречие. Утверждение 2 Теоремы доказано.
Assume that $G$ contains a $p$-element $x$ and a $q$-element $y$ such that $C_G(x)\cap C_G(y)>1$. If $a\in C_G(x)\cap C_G (y)$ then $|a^G|_{\{p,q\}}\leq|G|_{\{p,q\}}/(pq)$ and hence $|a^G|_{\{p,q\}}=1$, which is a contradiction. This justifies the corollary.

%\section{Examples}\label{exeples}\begin{ex}
%Let $G=GL(5,2), A=\left(
 %          \begin{array}{cc}
  %           0 & -1 \\
   %          1 & -1 \\
    %       \end{array}
     %    \right), N=N_G(A), H=GF(25)^*\leftthreetimes N$ be the natural semidirect product of the additive group of the field $GF(25)$ and the group $N$.
%          Then we have $|H|=5^2\cdot3\cdot2^4$. In the group $H$ there is no conjugacy class whose order would be divisible by $16$. Thus $2$ is the quasi-central divisor of $|H|$.
          %В группе $H$ нет класса сопряженности порядок которого делился бы на $16$. Таким образом $2$ является почти центральным делителем $|H|$.
%\end{ex}
%\begin{ex}
%Пусть $G=A\leftthreetimes((B\times C)\leftthreetimes D)$, где $A$--- элементарная абелева $p$-группа, $B,C,D$- циклические группы порядков $r,h,h$, соответственно, группы $A\leftthreetimes(B\times C), B\leftthreetimes D$ -- группы фробениуса, $p,r$ и $h$ различные простые числа. Тогда в $G$ нет класса сопряженности порядок которого делился бы $h^2$. Следовательно $h$ почти центральный делитель $|G|$.
%\end{ex}

%Будем говорить, что нечетное простое число $r_n(q)$ примитивный простой делитель числа $q^n-1$ если $r_n(q)\in\pi(q^n-1)\smallsetminus\pi(\Pi_{i=1}^{n-1}q^i-1)$.

%\begin{ex}
%Пусть $G=L_n(q)$, $n/2<i<n-1$. Тогда $G\in\{r_i(q),r_n(q)\}^*$.
%\end{ex}

\end{document}